\theoremstyle{thmstyleone}%
\newtheorem{theorem}{Theorem}
\newtheorem{corollary}{Corollary}
\newtheorem{lemma}{Lemma}
\newtheorem{proposition}{Proposition}%
\theoremstyle{thmstyletwo}%
\newtheorem{example}{Example}%
\newtheorem{remark}{Remark}%
\theoremstyle{thmstylethree}%
\newtheorem{definition}{Definition}%
\begin{document}
	
	\title[Results on continuous $K$-frames for quaternionic (super) Hilbert spaces]{Results on Continuous $K$-frames for Quaternionic (Super) Hilbert Spaces}
	
	
	\author{\fnm{Najib} \sur{Khachiaa}}\email{khachiaa.najib@uit.ac.ma}

	\affil{\orgdiv{Laboratory Partial Differential Equations, Spectral Algebra and Geometry, Department of Mathematics}, \orgname{Faculty of Sciences, University Ibn Tofail}, \orgaddress{\city{Kenitra}, \country{Morocco}}}

	\abstract{This paper aims to explore the concept of continuous \( K \)-frames in quaternionic Hilbert spaces. First, we investigate \( K \)-frames in a single quaternionic Hilbert space \( \mathcal{H} \), where \( K \) is a right $\mathbb{H}$-linear  bounded  operator acting on \( \mathcal{H} \). Then, we extend the research to  two quaternionic Hilbert spaces, \( \mathcal{H}_1 \) and \( \mathcal{H}_2 \), and study \( K_1 \oplus K_2 \)-frames for the super quaternionic Hilbert space \( \mathcal{H}_1 \oplus \mathcal{H}_2 \), where \( K_1 \) and \( K_2 \) are right $\mathbb{H}$-linear bounded operators on \( \mathcal{H}_1 \) and \( \mathcal{H}_2 \), respectively. We examine the relationship between the continuous \( K_1 \oplus K_2 \)-frames and the continuous \( K_1 \)-frames for \( \mathcal{H}_1 \) and the continuous \( K_2 \)-frames for \( \mathcal{H}_2 \). Additionally, we explore the duality between the continuous \( K_1 \oplus K_2 \)-frames and the continuous \( K_1 \)- and \( K_2 \)-frames individually.}

	\keywords{Continuous $K$-frames, Super continuous  $K$-frames, Quaternionic Hilbert spaces, Quaternionic super Hilbert spaces}

	\pacs[MSC Classification]{42C15; 42C40; 47A05.}
	
	\maketitle

\section{Introduction and preliminaries}
Frames in quaternionic Hilbert spaces provide a robust framework for analyzing and reconstructing signals within a higher-dimensional space. As an extension of the classical frame theory, these structures facilitate efficient data representation and processing. The unique properties of quaternionic spaces offer new avenues for exploration, particularly in applications such as signal processing and communications. Recently, the theory of frames has undergone several generalizations, including g-frames, $K$-frames and continuous frames. These advancements have broadened the applicability of frame theory in various mathematical contexts. In this work, we will study the theory of continuous $K$-frames for quaternionic Hilbert spaces and  will also study a particular case, which is the theory of continuous $K$-frames for  quaternionic super Hilbert spaces. The quaternionic field is an extension of the real and complex number systems, consisting of numbers known as quaternions. Quaternions are used to represent three-dimensional rotations and orientations, making them invaluable in computer graphics and robotics. Unlike real and complex numbers, quaternion multiplication is non-commutative, which adds complexity to their algebraic structure. Quaternions also provide a more efficient way to perform calculations in three-dimensional space, enhancing applications in physics and engineering.

\begin{definition}[The field of quaternions]\cite{11}
The non-commutative field of quaternions $\mathbb{H}$ is a four-dimensional real algebra with unity. In $\mathbb{H}$, $0$ denotes the null element  and $1$ denotes the identity with respect to multiplication. It also includes three so-called imaginary units, denoted by $i,j,k$. i.e.,
$$\mathbb{H}=\{a_0+a_1i+a_2j+a_3k:\; a_0,a_1,a_2,a_3\in \mathbb{R}\},$$
where $i^2=j^2=k^2=-1$, $ij=-ji=k$, $jk=-kj=i$ and $ki=-ik=j$. For each quaternion $q=a_0+a_1i+a_2j+a_3k$, we deifine the conjugate of $q$ denoted by $\overline{q}=a_0-a_1i-a_2j-a_2k \in \mathbb{H}$ and the module of $q$ denoted by $\vert q\vert $ as 
$$\vert q\vert =(\overline{q}q)^{\frac{1}{2}}=(q\overline{q})^{\frac{1}{2}}=\displaystyle{\sqrt{a_0^2+a_1^2+a_2^2+a_3^2}}.$$
For every $q\in \mathbb{H}$, $q^{-1}=\displaystyle{\frac{\overline{q}}{\vert q\vert ^2}}.$
\end{definition}

\begin{definition}[ Right quaternionic vector space]\cite{11}
A right quaternioniq vector space $V$ is a linear vector space under right scalar multiplication over the field of quaternions $\mathbb{H}$, i.e., the right scalar multiplication 
$$\begin{array}{rcl}
V\times \mathbb{H} &\rightarrow& V\\
(v,q)&\mapsto& v.q,
\end{array}$$
satisfies the following for all $u,v\in V$ and $q,p\in \mathbb{H}$:
\begin{enumerate}
\item $(v+u).q=v.q+u.q$,
\item $v.(p+q)=v.p+v.q$,
\item $v.(pq)=(v.p).q$.
\end{enumerate}
Instead of $v.q$, we often use the notation $vq$.
\end{definition}

\begin{definition}[Right quaterninoic pre-Hilbert space]\cite{11}
A right quaternionic pre-Hilbert space $\mathcal{H}$, is a right quaternionic vector space equipped with the binary mapping\\ \( \langle \cdot \mid \cdot \rangle : \mathcal{H} \times \mathcal{H} \to \mathbb{H} \) (called the Hermitian quaternionic inner product) which satisfies the following properties:

\begin{itemize}
    \item[(a)] \( \langle v_1 \mid v_2 \rangle = \overline{\langle v_2 \mid v_1 \rangle} \) for all \( v_1, v_2 \in \mathcal{H}\),
    \item[(b)] \( \langle v \mid v \rangle > 0 \) if \( v \neq 0 \),
    \item[(c)] \( \langle v \mid v_1 + v_2 \rangle = \langle v \mid v_1 \rangle + \langle v \mid v_2 \rangle \) for all \( v, v_1, v_2 \in \mathcal{H} \),
    \item[(d)] \( \langle v \mid uq \rangle = \langle v \mid u \rangle q \) for all \( v, u \in \mathcal{H} \) and \( q \in \mathbb{H} \).
\end{itemize}
\end{definition}

In view of Definition  $3$ , a right pre-Hilbert space $\mathcal{H}$ also has the property:

\begin{itemize}
    \item[(i)] $ \langle vq \mid u \rangle = \overline{q} \langle v \mid u \rangle$ for all $v, u \in \mathcal{H} $ and \( q \in \mathbb{H} \).
\end{itemize}

Let \( \mathcal{H} \) be a right quaternionic pre-Hilbert space with the Hermitian inner product \( \langle \cdot \mid \cdot \rangle \). Define the quaternionic norm \( \|\cdot\| : \mathcal{H} \to \mathbb{R}^+ \) on \( \mathcal{H} \) by
\[
\|u\| = \sqrt{\langle u \mid u \rangle}, \quad u \in \mathcal{H}, 
\]
which satisfies the following properties:
\begin{enumerate}
\item $\|uq\|=\|u\|\vert q\vert$, for all $u\in \mathcal{H}$ and $q\in \mathbb{H}$,
\item $\| u+v\|\leq \|u\|+\|v\|$,
\item $\|u\|=0\Longleftrightarrow u=0$ for $u\in \mathcal{H}$.
\end{enumerate}
\begin{definition}[ Right quaternionic Hilbert space]\cite{11}
A right quaternionic pre-Hilbert space is called a right quaternionic Hilbert space if it is complete with respect to the quaternionic norm.
\end{definition}

\begin{example}
Define $$L^2(\Omega,\mathbb{H}):=\left\{ g:\Omega\rightarrow \mathcal{H} \text{ measurable }:\; \displaystyle{\int_{\Omega}\vert g(\omega) \vert^2 d\mu(\omega)}<\infty\right\}.$$
$L^2(\Omega,\mathbb{H})$ under right multiplication by quaternionic scalars together with the quaternionic inner product defined as: $\langle g \mid h\rangle:=\displaystyle{\int_{\Omega} \overline{g(\omega)}h(\omega)}d\mu(\omega)$ for  $g,h\in L^2(\Omega,\mathbb{H})$, is a right quaternionic Hilbert space.
\end{example}
\begin{theorem}[The Cauchy-Schwarz inequality]\cite{11}
If $\mathcal{H}$ is a right quaternionic Hilbert space, then for all $u,v\in \mathcal{H}$, 
$$\vert \langle u\mid v\rangle \vert\leq \|u\| \|v\|.$$
\end{theorem}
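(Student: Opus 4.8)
The plan is to adapt the classical positivity-and-optimization argument to the quaternionic setting, taking care throughout that scalars act on the correct side and in the correct order, since $\mathbb{H}$ is non-commutative. First I would dispose of the degenerate case: if $v=0$ then $\langle u\mid 0\rangle = \langle u\mid v\cdot 0\rangle = \langle u\mid v\rangle\, 0 = 0$ by right-linearity (property (d)), so both sides vanish and the inequality is trivial. Hence I may assume $v\neq 0$, so that $\|v\|>0$ by the norm axioms.

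The key step is to consider the vector $w = u - v\lambda$ for a quaternion $\lambda\in\mathbb{H}$ to be chosen, and to expand $0\le\langle w\mid w\rangle$. Using additivity in each slot together with $\langle u\mid v\lambda\rangle = \langle u\mid v\rangle\lambda$ (property (d)) and the derived identity $\langle v\lambda\mid u\rangle = \overline{\lambda}\langle v\mid u\rangle$ (property (i)), and noting that $\langle v\mid v\rangle = \|v\|^2$ is a \emph{real} scalar and therefore central, I would obtain
\begin{equation*}
0 \le \|u\|^2 - \langle u\mid v\rangle\lambda - \overline{\lambda}\,\overline{\langle u\mid v\rangle} + \|v\|^2\,|\lambda|^2 .
\end{equation*}
Writing $c = \langle u\mid v\rangle$ and using the Hermitian symmetry $\overline{\langle u\mid v\rangle} = \overline{c}$ (property (a)), the two middle terms are mutually conjugate, $\overline{\lambda}\,\overline{c} = \overline{c\lambda}$, so their sum equals $2\,\mathrm{Re}(c\lambda)$ and the bound becomes $0 \le \|u\|^2 - 2\,\mathrm{Re}(c\lambda) + \|v\|^2|\lambda|^2$.

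With this in hand the final step is the right choice of $\lambda$. I would set $\lambda = \overline{c}/\|v\|^2 = \overline{\langle u\mid v\rangle}/\|v\|^2$; placing the conjugate on this side is exactly what forces $c\lambda = |c|^2/\|v\|^2$ to be real, so that $\mathrm{Re}(c\lambda) = |c|^2/\|v\|^2$ and $|\lambda|^2 = |c|^2/\|v\|^4$. Substituting collapses the right-hand side to $\|u\|^2 - |c|^2/\|v\|^2 \ge 0$, which rearranges to $|\langle u\mid v\rangle|^2 \le \|u\|^2\|v\|^2$; taking square roots gives the claim.

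The only genuine obstacle is the bookkeeping forced by non-commutativity: one cannot freely commute $\lambda$ past $c$, so the cancellation that produces the clean bound relies on putting the conjugate $\overline{c}$ on the correct (right) side of $v$ in the definition of $w$, and on the fact that $\langle v\mid v\rangle$ and $|\lambda|^2$ are real and hence commute with every quaternion. Once these placements are respected, the computation proceeds exactly as in the complex case.
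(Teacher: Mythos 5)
Your proof is correct. Note that the paper itself does not prove this statement at all --- it is quoted as a known result with a citation to Ghiloni--Moretti--Perotti \cite{11} --- so there is no in-paper argument to compare against; your positivity-and-optimization argument (expanding $0\le\langle u-v\lambda\mid u-v\lambda\rangle$ and choosing $\lambda=\overline{\langle u\mid v\rangle}/\|v\|^2$) is the standard one, and you have handled the two points where non-commutativity could cause trouble exactly right: scalars exit the second slot on the right and the first slot, conjugated, on the left, and the choice of $\overline{c}$ (rather than $c$) in $\lambda$ is what makes $c\lambda=|c|^2/\|v\|^2$ real so the cross terms collapse.
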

\begin{definition}[orthogonality]\cite{11}
Let \(\mathcal{H} \) be a right quaternionic Hilbert space and \( A\) be a subset of \( \mathcal{H} \). Then, define the set:

\begin{itemize}
    \item \( A^{\perp} = \{ v \in \mathcal{H} : \langle v \mid u \rangle = 0 \; \forall \; u \in A \} \);
    \item \( \langle A \rangle \) as the right quaternionic vector subspace of \( \mathcal{H} \) consisting of all finite right \( \mathbb{H} \)-linear combinations of elements of \( A\).
\end{itemize}
\end{definition}

\begin{proposition}\cite{11}
Let \(\mathcal{H} \) be a right quaternionic Hilbert space and \( A\) be a subset of \( \mathcal{H} \). Then,
\begin{enumerate}
\item $A^{\perp}=\langle A\rangle^\perp=\overline{\langle A\rangle }^\perp=\overline{\langle A\rangle ^\perp}.$
\item $(A^\perp)^\perp=\overline{\langle A\rangle}.$
\item $\overline{A}\oplus A^\perp=\mathcal{H}.$
\end{enumerate}
\end{proposition}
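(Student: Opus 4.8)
The plan is to separate the purely algebraic content (the chain of equalities in item~1) from the analytic core (the orthogonal decomposition), and then to deduce the double-complement identity in item~2 from that decomposition. Throughout I will use that the inner product is continuous in each slot: by the Cauchy--Schwarz inequality $\vert\langle v\mid u\rangle\vert\le\|v\|\,\|u\|$, each map $v\mapsto\langle v\mid u\rangle$ is Lipschitz, so in particular $A^\perp=\bigcap_{u\in A}\{v:\langle v\mid u\rangle=0\}$ is an intersection of closed sets and hence closed.

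For item~1 I would first show $A^\perp=\langle A\rangle^\perp$. The inclusion $\langle A\rangle^\perp\subseteq A^\perp$ is immediate from $A\subseteq\langle A\rangle$; conversely, if $v\in A^\perp$ then for any finite combination $\sum_i u_iq_i$ with $u_i\in A$ and $q_i\in\mathbb H$, properties (c) and (d) give $\langle v\mid\sum_i u_iq_i\rangle=\sum_i\langle v\mid u_i\rangle q_i=0$, so $v\in\langle A\rangle^\perp$. Next $\langle A\rangle^\perp=\overline{\langle A\rangle}^\perp$: one inclusion is trivial from $\langle A\rangle\subseteq\overline{\langle A\rangle}$, and for the other I approximate any $w\in\overline{\langle A\rangle}$ by $w_n\in\langle A\rangle$ and use continuity of the inner product to pass $\langle v\mid w_n\rangle=0$ to the limit. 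Finally, since $A^\perp$ is closed, $\langle A\rangle^\perp=A^\perp=\overline{A^\perp}=\overline{\langle A\rangle^\perp}$, which closes the chain.

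The analytic heart is item~3, the decomposition $\mathcal H=M\oplus M^\perp$ for the closed right-linear subspace $M=\overline{\langle A\rangle}$ (so that $\overline A$ in the statement is read as $\overline{\langle A\rangle}$). I would prove the projection theorem directly: fix $x\in\mathcal H$, put $d=\inf_{m\in M}\|x-m\|$, take a minimizing sequence $(m_n)$, and use the parallelogram law $\|u+v\|^2+\|u-v\|^2=2\|u\|^2+2\|v\|^2$ to show $(m_n)$ is Cauchy; completeness together with closedness of $M$ then yields a nearest point $m_0\in M$. To see that $z:=x-m_0\in M^\perp$, note that $m_0+mq\in M$ for every $m\in M$ and $q\in\mathbb H$, so expanding $\|z-mq\|^2\ge\|z\|^2$ via properties (d) and (i) gives $-2\,\mathrm{Re}\big(\langle z\mid m\rangle q\big)+\vert q\vert^2\|m\|^2\ge0$; choosing $q=t\,\overline{\langle z\mid m\rangle}$ and letting $t\to0^+$ forces $\langle z\mid m\rangle=0$. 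Since $M\cap M^\perp=\{0\}$ (a vector orthogonal to itself is $0$ by property (b)), the sum is direct, and combining with item~1 gives $\mathcal H=\overline{\langle A\rangle}\oplus A^\perp$. I expect this variational step, with the care required by the non-commutativity of the quaternionic scalars, to be the main obstacle.

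For item~2 I observe that $\langle A\rangle\subseteq(A^\perp)^\perp$: if $w\in\langle A\rangle$ and $v\in A^\perp$, then $\langle w\mid v\rangle=\overline{\langle v\mid w\rangle}=0$. Since $(A^\perp)^\perp$ is closed, this upgrades to $\overline{\langle A\rangle}\subseteq(A^\perp)^\perp$. For the reverse inclusion I invoke item~3 with $M=\overline{\langle A\rangle}$ and $M^\perp=A^\perp$: any $w\in(A^\perp)^\perp$ decomposes as $w=m+n$ with $m\in\overline{\langle A\rangle}$ and $n\in A^\perp$; then $n=w-m\in(A^\perp)^\perp$ as well, so $n\in A^\perp\cap(A^\perp)^\perp=\{0\}$ and $w=m\in\overline{\langle A\rangle}$. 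Hence $(A^\perp)^\perp=\overline{\langle A\rangle}$.
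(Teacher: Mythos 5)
The paper never proves this proposition: it is quoted as a preliminary from the reference on quaternionic Hilbert spaces (Ghiloni--Moretti--Perotti) and used as a black box, so there is no internal proof to compare against. Your argument is a correct, self-contained substitute. The chain of equalities in item~1 is handled exactly as it should be (complements reverse inclusions, right-linearity via properties (c) and (d) for the span, Cauchy--Schwarz for continuity and closedness of $A^\perp$); the variational proof of the projection theorem in item~3 is the standard one, and you correctly manage the only point where the quaternionic setting demands care: expanding $\|z-mq\|^2$ using $\langle z\mid mq\rangle=\langle z\mid m\rangle q$ and $\langle mq\mid z\rangle=\overline q\,\langle m\mid z\rangle$, and then choosing $q=t\,\overline{\langle z\mid m\rangle}$ so that the cross term becomes $2t\,\vert\langle z\mid m\rangle\vert^2$ and the limit $t\to0^+$ kills it. Your reading of $\overline A$ in item~3 as $\overline{\langle A\rangle}$ is also the right one --- for an arbitrary subset $A$ the closure alone need not be a right subspace, so the direct-sum statement only makes sense for the closed span (and then item~1 identifies $A^\perp$ with $\overline{\langle A\rangle}^\perp$, as you use). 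The deduction of item~2 from the decomposition is clean: $n=w-m$ lies in the subspace $(A^\perp)^\perp$ and in $A^\perp$, hence is self-orthogonal and vanishes. I see no gap; the one thing your write-up buys over the paper is precisely that it makes the preliminaries independent of the external citation.
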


\begin{theorem}\cite{11}
Let \( \mathcal{H} \) be a quaternionic Hilbert space and let \( N \) be a subset of \( \mathcal{H} \) such that, for \( z, z' \in N \), we have \( \langle z \mid z' \rangle = 0 \) if \( z \neq z' \) and \( \langle z \mid z \rangle = 1 \). Then, the following conditions are equivalent:

\begin{itemize}
    \item[(a)] For every \( u, v \in \mathcal{H} \), the series \( \sum_{z \in N} \langle u \mid z \rangle \langle z \mid v \rangle \) converges absolutely and
    \[
    \langle u \mid v \rangle = \sum_{z \in N} \langle u \mid z \rangle \langle z \mid v \rangle;
    \]
    
    \item[(b)] For every \( u \in \mathcal{H} \), \( \|u\|^2 = \displaystyle{\sum_{z \in N} |\langle z \mid u \rangle|^2 }\);
    
    \item[(c)] \( N^{\perp} = \{0\} \);
    
    \item[(d)] \( \langle N \rangle \) is dense in \( \mathcal{H} \).
\end{itemize}
\end{theorem}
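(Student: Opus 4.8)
The plan is to prove the four conditions equivalent through the cyclic chain $(a)\Rightarrow(b)\Rightarrow(c)\Rightarrow(d)\Rightarrow(a)$, exploiting the orthonormality of $N$ together with Proposition 1 and the Cauchy--Schwarz inequality of Theorem 1. Throughout I would be careful to keep every quaternionic scalar on the correct side, since multiplication in $\mathbb{H}$ is non-commutative; in particular I would repeatedly use property (i), namely $\langle vq\mid u\rangle=\overline{q}\langle v\mid u\rangle$, together with conjugate symmetry $\langle z\mid u\rangle=\overline{\langle u\mid z\rangle}$.

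The first two implications are short. For $(a)\Rightarrow(b)$ I would simply set $v=u$ in (a); since $\langle u\mid z\rangle\langle z\mid u\rangle=\langle u\mid z\rangle\,\overline{\langle u\mid z\rangle}=|\langle z\mid u\rangle|^2$, the identity collapses to $\|u\|^2=\sum_{z\in N}|\langle z\mid u\rangle|^2$. For $(b)\Rightarrow(c)$, if $v\in N^{\perp}$ then $\langle v\mid z\rangle=0$, hence $\langle z\mid v\rangle=0$, for every $z\in N$, so (b) forces $\|v\|^2=0$ and $v=0$. The implication $(c)\Rightarrow(d)$ is then purely a matter of invoking Proposition 1: from $A^{\perp}=\langle A\rangle^{\perp}$ and $(A^{\perp})^{\perp}=\overline{\langle A\rangle}$ with $A=N$, the hypothesis $N^{\perp}=\{0\}$ gives $\overline{\langle N\rangle}=\{0\}^{\perp}=\mathcal{H}$, i.e.\ density.

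The analytic heart of the argument is $(d)\Rightarrow(a)$, and this is where I expect the real work. The first preliminary is Bessel's inequality: for any finite $F\subseteq N$ and the partial sum $u_F:=\sum_{z\in F}z\,\langle z\mid u\rangle$, a completion-of-squares computation (using $\langle z\mid u_F\rangle=\langle z\mid u\rangle$ for $z\in F$) yields the best-approximation identity $\|u-u_F\|^2=\|u\|^2-\sum_{z\in F}|\langle z\mid u\rangle|^2\ge 0$, whence $\sum_{z\in N}|\langle z\mid u\rangle|^2\le\|u\|^2$ and in particular the family is summable. Using density, given $\varepsilon>0$ I would approximate $u$ to within $\varepsilon$ by an element of $\langle N\rangle$ supported on some finite $F$; the best-approximation property $\|u-u_F\|\le\|u-w\|$ for all $w\in\langle F\rangle$ (Pythagoras, since $u-u_F\perp\langle F\rangle$) then forces $\|u-u_F\|<\varepsilon$, so the finite sums of $\sum|\langle z\mid u\rangle|^2$ exceed $\|u\|^2-\varepsilon^2$. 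Letting $\varepsilon\to0$ upgrades Bessel to equality, and combined with orthonormality it shows the net $(u_F)$ is Cauchy and converges in norm to $u$ (its limit differs from $u$ by an element of $N^{\perp}=\{0\}$, using the reverse reading of Proposition 1 that $(d)\Rightarrow(c)$).

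Finally, to obtain (a) I would pass to the limit in $\langle u_F\mid v\rangle=\sum_{z\in F}\overline{\langle z\mid u\rangle}\,\langle z\mid v\rangle=\sum_{z\in F}\langle u\mid z\rangle\langle z\mid v\rangle$, where property (i) is exactly what moves the scalar $\langle z\mid u\rangle$ out of the first slot as its conjugate; continuity of the inner product (Cauchy--Schwarz) and $u_F\to u$ then give the claimed formula $\langle u\mid v\rangle=\sum_{z\in N}\langle u\mid z\rangle\langle z\mid v\rangle$. Absolute convergence follows from $|\langle u\mid z\rangle\langle z\mid v\rangle|\le\tfrac12\bigl(|\langle z\mid u\rangle|^2+|\langle z\mid v\rangle|^2\bigr)$ together with two applications of Bessel. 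The only genuine subtlety I anticipate is bookkeeping the non-commutative scalars in the best-approximation computation and in the recovery of (a); the topological content (summability, Cauchy nets, continuity of $\langle\cdot\mid\cdot\rangle$) transfers essentially verbatim from the complex case.
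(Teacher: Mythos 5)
There is nothing in the paper to compare your argument against: the statement is Theorem 2 of the preliminaries, quoted from \cite{11} (Ghiloni--Moretti--Perotti) without proof, as is everything else in Section 1. So the only question is whether your blind proof stands on its own, and it does. The chain $(a)\Rightarrow(b)\Rightarrow(c)\Rightarrow(d)\Rightarrow(a)$ is sound: $(a)\Rightarrow(b)$ by setting $v=u$ and using $q\overline{q}=|q|^{2}$; $(b)\Rightarrow(c)$ is immediate from conjugate symmetry; $(c)\Rightarrow(d)$ is exactly item 2 of Proposition 1, $\overline{\langle N\rangle}=(N^{\perp})^{\perp}=\{0\}^{\perp}=\mathcal{H}$; and your $(d)\Rightarrow(a)$ correctly assembles the quaternionic Bessel inequality, the identity $\|u-u_F\|^{2}=\|u\|^{2}-\sum_{z\in F}|\langle z\mid u\rangle|^{2}$ (which indeed requires the coefficients to sit on the right, $u_F=\sum_{z\in F}z\,\langle z\mid u\rangle$, exactly as you wrote them), the Pythagorean comparison against an approximant from $\langle N\rangle$, property (i) to pull $\overline{\langle z\mid u\rangle}=\langle u\mid z\rangle$ out of the first slot, and continuity of the inner product via Cauchy--Schwarz; the absolute-convergence bound $|\langle u\mid z\rangle\langle z\mid v\rangle|\le\tfrac12\bigl(|\langle z\mid u\rangle|^{2}+|\langle z\mid v\rangle|^{2}\bigr)$ is legitimate because the quaternionic modulus is multiplicative. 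One simplification worth noting: the Cauchy-net detour and the appeal to $N^{\perp}=\{0\}$ at the end of your Parseval step are redundant, since once $\sum_{z\in N}|\langle z\mid u\rangle|^{2}=\|u\|^{2}$ is established, the same best-approximation identity shows $\|u-u_F\|^{2}\to 0$ directly, with no need for completeness or for $(d)\Rightarrow(c)$. This is the standard Hilbert-space argument, and your bookkeeping of the non-commutative scalars is the only genuinely quaternionic ingredient; it is handled correctly throughout.
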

\begin{definition}\cite{11}
A subset $N$ of $\mathcal{H}$ that satisfies one of the statements in Theorem $2$ is called Hilbert basis or orthonormal basis for $\mathcal{H}$.
\end{definition}
\begin{theorem}\cite{11}
Every quaternionic Hilbert space has a Hilbert basis.\\
\end{theorem}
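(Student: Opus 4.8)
The plan is to follow the classical Hilbert-space argument via Zorn's lemma, the only quaternion-specific care being that the normalization step must use a real scalar on the correct side so that right multiplication interacts properly with the Hermitian inner product. First I would introduce the collection \(\mathcal{O}\) of all orthonormal subsets of \(\mathcal{H}\), that is, all \(S \subseteq \mathcal{H}\) such that \(\langle z \mid z' \rangle = 0\) whenever \(z, z' \in S\) with \(z \neq z'\), and \(\langle z \mid z \rangle = 1\) for every \(z \in S\). This collection is nonempty since \(\emptyset \in \mathcal{O}\) (and if \(\mathcal{H} \neq \{0\}\) one may instead start from a single normalized vector). I would then partially order \(\mathcal{O}\) by set inclusion.

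Next I would verify the hypothesis of Zorn's lemma, namely that every chain \(\mathcal{C} \subseteq \mathcal{O}\) admits an upper bound. Setting \(U = \bigcup_{S \in \mathcal{C}} S\), any two distinct elements \(z, z' \in U\) lie in a common member of \(\mathcal{C}\) because the chain is totally ordered, so the required orthonormality relations hold for them; hence \(U \in \mathcal{O}\) and \(S \subseteq U\) for every \(S \in \mathcal{C}\). Zorn's lemma then furnishes a maximal element \(N \in \mathcal{O}\).

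I would then show that maximality forces \(N^{\perp} = \{0\}\). Arguing by contradiction, suppose there exists \(v \in N^{\perp}\) with \(v \neq 0\), so that \(\|v\| > 0\). Put \(w = v\,\|v\|^{-1}\), using the real scalar \(\|v\|^{-1} \in \mathbb{R} \subseteq \mathbb{H}\). Since this scalar is real, hence self-conjugate, the norm property yields \(\|w\| = \|v\|\,\bigl|\|v\|^{-1}\bigr| = 1\), and property (i) gives \(\langle w \mid u \rangle = \overline{\|v\|^{-1}}\,\langle v \mid u \rangle = 0\) for every \(u \in N\), because \(v \in N^{\perp}\). Consequently \(N \cup \{w\}\) is an orthonormal set strictly containing \(N\), contradicting maximality; therefore \(N^{\perp} = \{0\}\). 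This is precisely condition (c) of Theorem \(2\), so \(N\) satisfies the equivalent statements (a)--(d) and is thus a Hilbert basis for \(\mathcal{H}\).

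The main obstacle — modest but genuinely quaternion-specific — is the normalization step: in the noncommutative setting one must scale on the right and by a real scalar, so that the relation \(\langle vq \mid u \rangle = \overline{q}\,\langle v \mid u \rangle\) does not reintroduce an unwanted quaternionic factor and the identity \(\langle w \mid w \rangle = 1\) is genuinely preserved. Once this is handled, the remainder of the argument transfers verbatim from the complex case, with Theorem \(2\) supplying the final translation from a maximal orthonormal set to a Hilbert basis.
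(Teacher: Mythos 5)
Your proof is correct: the paper itself gives no proof of this theorem (it is quoted from the reference \cite{11}), and your Zorn's-lemma argument --- maximal orthonormal set, then maximality forces $N^{\perp}=\{0\}$, then condition (c) of the characterization theorem --- is exactly the standard argument used in that reference. One small remark: the quaternion-specific ``obstacle'' you flag is even milder than you suggest, since right multiplication by \emph{any} quaternion preserves orthogonality to $N$ (because $\langle vq \mid u\rangle = \overline{q}\,\langle v\mid u\rangle = \overline{q}\cdot 0 = 0$); the real scalar $\|v\|^{-1}$ is merely the most convenient choice for the normalization $\langle w\mid w\rangle = 1$.
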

\begin{definition}[Right $\mathbb{H}$-linear operator]\cite{11}
Let $\mathcal{H}$ and $\mathcal{K}$ be two right quaternionic Hilbert spaces. Let  $L:\mathcal{H}\rightarrow \mathcal{K}$ be a map.
\begin{enumerate}
\item  $L$ is said to be right $\mathbb{H}$-linear operator if $L(uq+vp)=L(u)q+L(v)p$ for all $u,v\in \mathcal{H}$ and $p,q\in \mathcal{H}$.
\item  If $L$ is  a right $\mathbb{H}$-linear operator. $L$ is continuous if and only if $L$ is bounded; i.e., there exists $M> 0$ such that for all $u\in \mathcal{H}$, 
$$\| Lu\|\leq M\| u\|.$$
We denote $\mathbb{B}(\mathcal{H},\mathcal{K})$ the set of all right $\mathbb{H}$-linear bounded operators from $\mathcal{H}$ to $\mathcal{K}$, and if $\mathcal{H}=\mathcal{K}$, we denote $\mathbb{B}(\mathcal{H})$ instead of $\mathbb{B}(\mathcal{H},\mathcal{H})$.
\item If $L$ is a right $\mathbb{H}$-linear  bounded operator, we define the norm of $L$ as: 
$$\|L\|=\displaystyle{\sup_{\|u\|=1}\|Lu\|}=\inf\{M>0:\; \| Lu\|\leq M\| u\|,\; \forall u\in \mathcal{H}\}.$$
And we have for all $L,M\in \mathbb{B}(\mathcal{H}), \|L+M\|\leq\|L\|+\|M\|$ and $\|MN\|\leq\|L\|\|M\|.$\\
\end{enumerate}
\end{definition}
\begin{definition}[Adjoint operator]\cite{11}
Let $\mathcal{H}$ be a right quaternionic Hilbert space and  $L\in \mathbb{B}(\mathcal{H})$. The adjoint operator of $L$, denoted $L^*$, is the unique operator in $\mathbb{B}(\mathcal{H})$ satisfying for all $u,v\in \mathcal{H}$:
$$\langle Lu\mid v\rangle=\langle u\mid L^*v\rangle.$$
\end{definition}
\begin{theorem}[ The closed graph theorem]\label{thm7} \cite{11}
Let $\mathcal{H}, \mathcal{K}$ be two right quaternionic Hilbert spaces and let $L:\mathcal{H}\rightarrow \mathcal{K}$ be a right $\mathbb{H}$-linear opeartor. If $Graph(L)$ is closed, then $L\in \mathbb{B}(\mathcal{H},\mathcal{K})$.\\
\end{theorem}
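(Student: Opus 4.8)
The plan is to follow the classical route to the closed graph theorem: realize the graph of $L$ as a closed subspace of a direct-sum Hilbert space, and deduce boundedness of $L$ from the bounded inverse theorem. The only genuinely quaternionic issue is to secure this last ingredient in the present category, and I will argue that it transfers from the real case essentially for free.

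First I would equip the external direct sum $\mathcal{H} \oplus \mathcal{K}$ with the inner product
$$\langle (u_1, v_1) \mid (u_2, v_2) \rangle = \langle u_1 \mid u_2 \rangle_{\mathcal{H}} + \langle v_1 \mid v_2 \rangle_{\mathcal{K}},$$
and verify that this makes $\mathcal{H} \oplus \mathcal{K}$ a right quaternionic Hilbert space with norm $\|(u,v)\|^2 = \|u\|^2 + \|v\|^2$. The set $Graph(L) = \{(u, Lu) : u \in \mathcal{H}\}$ is a right $\mathbb{H}$-linear subspace since $L$ is right $\mathbb{H}$-linear, and by hypothesis it is closed; being a closed subspace of a complete space it is itself a right quaternionic Hilbert space under the restricted inner product.

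Next I would introduce the coordinate projections $P : Graph(L) \to \mathcal{H}$, $(u, Lu) \mapsto u$, and $Q : Graph(L) \to \mathcal{K}$, $(u, Lu) \mapsto Lu$. Both are right $\mathbb{H}$-linear and bounded (each of norm at most $1$), and $P$ is a bijection with algebraic inverse $u \mapsto (u, Lu)$. The decisive step is to show that $P^{-1}$ is bounded; granting this, the identity $L = Q \circ P^{-1}$ exhibits $L$ as a composite of bounded right $\mathbb{H}$-linear maps, so $L \in \mathbb{B}(\mathcal{H}, \mathcal{K})$.

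The boundedness of $P^{-1}$ is precisely the bounded inverse theorem, and this is where the main difficulty lies. I would obtain it from the open mapping theorem for quaternionic Hilbert spaces, which rests on the Baire category theorem; the latter applies because every quaternionic Hilbert space is a complete metric space for the metric induced by $\|\cdot\|$. The point needing care is that the classical open-mapping argument must remain valid in the quaternionic category, but that argument uses only the additive group structure, scaling by positive reals, and completeness --- all intact here --- while right $\mathbb{H}$-linearity is preserved by the sums and limits involved, so the non-commutativity of $\mathbb{H}$ never interferes. Equivalently, and perhaps most cleanly, one can regard $\mathcal{H}$ and $\mathcal{K}$ as real Hilbert spaces under $\mathrm{Re}\langle \cdot \mid \cdot \rangle$, which induces the same norm and hence the same topology; then $L$ is in particular $\mathbb{R}$-linear with a graph that remains closed, the classical real closed graph theorem applies directly, and the resulting estimate $\|Lu\| \le M\|u\|$ is exactly boundedness in the quaternionic sense.
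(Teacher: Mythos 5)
The paper gives no proof of this statement at all: it is imported verbatim as a preliminary, with a citation to Ghiloni--Moretti--Perotti \cite{11}, so there is no in-paper argument to compare yours against; your proposal must be judged on its own, and it stands up. The reduction $L = Q\circ P^{-1}$ on the closed (hence complete) subspace $Graph(L)\subset \mathcal{H}\oplus\mathcal{K}$ is the standard route, and you correctly isolate the only quaternionic issue, namely the availability of the bounded inverse (open mapping) theorem in this category. Both of your justifications are sound: the Baire-category proof of the open mapping theorem uses only additivity, scaling by real numbers, and completeness, and none of these interact with the non-commutativity of $\mathbb{H}$; more efficiently, the realification $\mathrm{Re}\langle\cdot\mid\cdot\rangle$ is a genuine real inner product (symmetry follows from $\langle u\mid v\rangle = \overline{\langle v\mid u\rangle}$, positivity from axiom (b)) inducing exactly the same norm, a right $\mathbb{H}$-linear map is in particular $\mathbb{R}$-linear because real scalars are central in $\mathbb{H}$, and its graph is closed for the same topology, so the classical real closed graph theorem yields $\|Lu\|\le M\|u\|$, which is precisely boundedness in the paper's sense. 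One remark worth internalizing: once you invoke the realification, the entire direct-sum/projection scaffolding becomes redundant --- applying the real closed graph theorem directly to $L$ finishes the proof in one line --- so what you present as one proof with a technical patch is really two proofs, a short one and a longer self-contained one, either of which suffices.
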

\begin{theorem}[Quaternionic representation Riesz' theorem]\label{Riesz}\cite{10}
    If $\mathcal{H}$ is a right  quaternionic Hilbert space, the map
    \[
          v\in \mathcal{H} \mapsto \langle v \mid \cdot \rangle \in \mathcal{H}'
    \]
    is well-posed and defines a conjugate-$\mathbb{H}$-linear isomorphism.\\
\end{theorem}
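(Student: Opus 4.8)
The plan is to separate the content of the word ``isomorphism'' into three ingredients: well-posedness together with conjugate-$\mathbb{H}$-linearity, injectivity, and surjectivity. Throughout write $\Phi(v) := \langle v \mid \cdot\rangle$.

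First, well-posedness and the linearity type are immediate from the inner-product axioms. For fixed $v$ the map $u \mapsto \langle v \mid u\rangle$ is right $\mathbb{H}$-linear by properties (c) and (d), and it is bounded with $\|\Phi(v)\| \leq \|v\|$ by the Cauchy--Schwarz inequality, so indeed $\Phi(v) \in \mathcal{H}'$. Additivity $\Phi(v+w) = \Phi(v) + \Phi(w)$ follows from (a) and (c), while property (i), namely $\langle vq \mid u\rangle = \overline{q}\,\langle v \mid u\rangle$, gives $\Phi(vq) = \overline{q}\,\Phi(v)$; together these say that $\Phi$ is conjugate-$\mathbb{H}$-linear. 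Injectivity is the usual one-line argument: if $\Phi(v) = 0$ then in particular $\langle v \mid v\rangle = 0$, hence $v = 0$ by property (b).

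The substantive step is surjectivity. Given a nonzero $f \in \mathcal{H}'$, set $\mathcal{N} := \ker f$; since $f$ is continuous, $\mathcal{N}$ is a closed right $\mathbb{H}$-subspace, so the decomposition $\overline{A}\oplus A^{\perp}=\mathcal{H}$ of the preceding Proposition yields $\mathcal{H} = \mathcal{N}\oplus \mathcal{N}^{\perp}$ with $\mathcal{N}^{\perp}\neq\{0\}$. I would first show $\mathcal{N}^{\perp}$ is one-dimensional over $\mathbb{H}$: for nonzero $z_1,z_2 \in \mathcal{N}^{\perp}$ one has $f(z_i)\neq 0$ (else $z_i \in \mathcal{N}\cap\mathcal{N}^{\perp}=\{0\}$), and the element $z_1 - z_2\,f(z_2)^{-1}f(z_1)$ lies in both $\mathcal{N}$ and $\mathcal{N}^{\perp}$, hence vanishes, so $z_1$ is a right scalar multiple of $z_2$. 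Fixing a unit vector $z \in \mathcal{N}^{\perp}$, the candidate representative is $v := z\,\overline{f(z)}$, the scalar being on the right and conjugated exactly as property (i) requires. To verify $\Phi(v)=f$, decompose an arbitrary $u = n + z\beta$ with $n \in \mathcal{N}$ and $\beta \in \mathbb{H}$; then $f(u) = f(z)\beta$, while $\langle v \mid u\rangle = f(z)\,\langle z \mid z\rangle\,\beta = f(z)\beta$, using $\langle z \mid n\rangle = 0$, $\|z\|=1$, and property (i) to bring $\overline{f(z)}$ out as $f(z)$. Hence $f = \Phi(v)$.

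The main obstacle is entirely the non-commutativity of $\mathbb{H}$: every scalar must be placed on the correct side and conjugated at the right moment, which is precisely why the representative is $z\,\overline{f(z)}$ rather than the naive $f(z)\,z$, and why the one-dimensionality step uses the combination $z_1 - z_2\,f(z_2)^{-1}f(z_1)$ with the inverse inserted on the correct side. Once this bookkeeping of left/right actions and conjugation is handled, the argument follows the classical Riesz scheme verbatim. Finally, testing at $u = v/\|v\|$ turns the Cauchy--Schwarz bound $\|\Phi(v)\| \leq \|v\|$ into the equality $\|\Phi(v)\| = \|v\|$, so $\Phi$ is moreover isometric and hence a bijective, bounded, conjugate-$\mathbb{H}$-linear map with bounded inverse.
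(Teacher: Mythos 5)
The paper does not actually prove this theorem: it is imported from the literature (cited as \cite{10}) and used as a black box, so there is no in-paper argument to compare yours against. Judged on its own, your proof is correct and complete; it is the classical Riesz kernel-decomposition scheme transplanted to the right-quaternionic setting, and the two places where non-commutativity could derail it are handled properly: the representative is $v = z\,\overline{f(z)}$ with the conjugated scalar on the right, so that property (i) releases $f(z)$ on the left when you compute $\langle v \mid u\rangle = f(z)\langle z\mid z\rangle\beta$, and the one-dimensionality of $(\ker f)^{\perp}$ is obtained from the vector $z_1 - z_2\,f(z_2)^{-1}f(z_1)$, which right $\mathbb{H}$-linearity of $f$ annihilates. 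Two facts you use silently and could state explicitly for completeness: $(\ker f)^{\perp}$ is itself a right $\mathbb{H}$-submodule (needed so that $z_1 - z_2\,f(z_2)^{-1}f(z_1)$ stays inside it; this follows from property (i) and additivity in the first slot), and $\ker f \cap (\ker f)^{\perp} = \{0\}$ by positive-definiteness (b), which is what lets you conclude that the difference vector vanishes. Also, the orthogonal decomposition you invoke requires $\ker f$ to be closed, which holds precisely because $f \in \mathcal{H}'$ is continuous; you do note this, and it is the only place where boundedness of $f$ enters the surjectivity argument.
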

\begin{definition}[Continuous frames]
Let $(\Omega,\mu)$ be a measure space, $F:\Omega\rightarrow \mathcal{H}$ be a weakly measurable mapping. $F$ is said to be continuous frame for $\mathcal{H}$ if there exist $0<A\leq B<\infty$ such that for all $u\in \mathcal{H}$, the following inequality holds:
$$A\|u\|^2\leq \displaystyle{\int_{\Omega}\vert \langle F(\omega),u\rangle \vert^2 d\mu(\omega)}\leq B\|u\|^2.$$
\end{definition}

\begin{definition}[Continuous $K$-Frames]
Let $(\Omega,\mu)$ be a measure space, $F:\Omega\rightarrow \mathcal{H}$ be a weakly measurable mapping, $K\in \mathbb{B}(\mathcal{H})$. $F$ is said to be continuous $K$-frame for $\mathcal{H}$ if there exist $0<A\leq B<\infty$ such that for all $u\in \mathcal{H}$, the following inequality holds:
$$A\|K^*u\|^2\leq \displaystyle{\int_{\Omega}\vert \langle F(\omega),u\rangle \vert^2 d\mu(\omega)}\leq B\|u\|^2.$$
\begin{enumerate}
\item If only the upper inequality holds, $F$ is called a Bessel mapping for $\mathcal{H}$.
\item If $A=B=1$, $F$ is called a Parseval continuous $K$-frame for $\mathcal{H}$.
\end{enumerate}
\end{definition}
Thanks to the definition of a Bessel mapping, the following definition is well justified.
\begin{definition}[The transform operator]
Let $(\Omega, \mu)$ be a measure space and  $F:\Omega\rightarrow \mathcal{H}$ be a Bessel mapping for $\mathcal{H}$. The transform operator of $F$ is the right $\mathbb{H}$-linear bounded operator denoted by $\theta$ and  defined as follows: $$\begin{array}{rcl}
\theta: \mathcal{H}&\rightarrow& L^2(\Omega,\mathbb{H})\\
u&\mapsto& \theta (u),
\end{array}$$
where $\theta(u)\omega:= \langle F\omega ,u\rangle$ for all $\omega \in \Omega$.\\
\end{definition}

Let $u\in \mathcal{H}$ and $g\in L^2(\Omega,\mathbb{H})$, we have: 
$$\begin{array}{rcl}
\langle \theta(u), g\rangle&=&\displaystyle{\int_{\Omega} \overline{\theta(u)}gd\mu}\\
&=&\displaystyle{\int_{\Omega} \overline{\langle F\omega,u\rangle} g(\omega)d\mu(\omega)}\\
&=&\displaystyle{\int_{\Omega} \langle u,F\omega\rangle g(\omega)d\mu(\omega)}\\
&=&\displaystyle{\int_{\Omega} \langle u, F(\omega)g(\omega)\rangle d\mu(\omega)}.\\
\end{array}$$
By the quaternionic represention Riesz' theorem \ref{Riesz}, there exists a unique vector in $\mathcal{H}$, denoted by $\displaystyle{\int_{\Omega}F(\omega)g(\omega)d\mu(\omega)}$, such that for all $u\in \mathcal{H}$, we have: \begin{equation}
\displaystyle{\int_{\Omega} \langle u, F(\omega)g(\omega)\rangle d\mu(\omega)}=\left\langle u, \displaystyle{\int_{\Omega}F(\omega)g(\omega)d\mu(\omega)}\right\rangle.
\end{equation}
Hence, the adjoint operator of $\theta$ is defined as follows:
$$\begin{array}{rcl}
\theta^*:\mathcal{H}&\rightarrow& L^2(\Omega,\mathbb{H})\\
u&\mapsto& \displaystyle{\int_{\Omega}F(\omega)g(\omega)d\mu(\omega)}.
\end{array}$$

\begin{definition}[The pre-frame operator]
Let $(\Omega, \mu)$ be a measure space and  $F:\Omega\rightarrow \mathcal{H}$ be a Bessel mapping for $\mathcal{H}$. The pre-frame operator of $F$, denoted by $T$, is the adjoint of its transform operator, i.e., $T=\theta^*$.
\end{definition}

\begin{definition}[The frame operator]
Let $(\Omega, \mu)$ be a measure space and  $F:\Omega\rightarrow \mathcal{H}$ be a Bessel mapping for $\mathcal{H}$. The frame operator of $F$, denoted by $S$, is the composite of its pre-frame operator and its transform operator, i.e., $S=T\theta$. By respecting the previous notation in $(1)$, $S$ is defined as follows: $$\begin{array}{rcl}
S:\mathcal{H}&\rightarrow&\mathcal{H}\\
u&\mapsto&  \displaystyle{\int_{\Omega}F(\omega)\langle F(\omega),u\rangle d\mu(\omega)}.
\end{array}$$
\end{definition}
\begin{remark}\label{rem1}
Let $(\Omega, \mu)$ be a measure space and  $F:\Omega\rightarrow \mathcal{H}$ be a Bessel mapping for $\mathcal{H}$. Then, for all $u\in \mathcal{H}$, we have: $$\displaystyle{\int_{\Omega}\vert \langle F(\omega),u\rangle \vert^2 d\mu(\omega)}=\langle Su,u\rangle=\|\theta u\|^2.$$
\end{remark}
The following proposition quickly follows from the definition of a frame and Remark \ref{rem1}.
\begin{proposition}
Let $(\Omega, \mu)$ be a measure space and  $F:\Omega\rightarrow \mathcal{H}$ be a continuous frame of \( \mathcal{H} \). Then:
\begin{enumerate}
    \item \( \theta \) is a right $\mathbb{H}$-linear  bounded injective operator with closed range.
    \item \( T \) is a right $\mathbb{H}$-linear bounded surjective operator.
    \item \( S \) is a right $\mathbb{H}$-linear bounded, positive, and invertible operator.
\end{enumerate}
\end{proposition}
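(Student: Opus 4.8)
The plan is to push the two-sided frame inequality through the operator dictionary supplied by Remark \ref{rem1} and then read off each of the three assertions from the resulting estimates. By Remark \ref{rem1} the frame condition is equivalent to
\[
A\|u\|^2 \le \|\theta u\|^2 = \langle Su \mid u\rangle \le B\|u\|^2 \qquad \text{for all } u\in\mathcal{H}.
\]
The right $\mathbb{H}$-linearity of $\theta$, $T=\theta^*$ and $S=T\theta$ is immediate from the way they were constructed, so in each part only the analytic content needs to be argued, and the natural order is $(1)$, then $(3)$, then $(2)$.

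For part $(1)$, boundedness of $\theta$ is the upper estimate $\|\theta u\|\le \sqrt{B}\,\|u\|$, and injectivity is the lower one: if $\theta u=0$ then $A\|u\|^2\le 0$, forcing $u=0$. To obtain a closed range I would use that $\theta$ is bounded below together with completeness of $\mathcal{H}$: if $\theta u_n\to v$ in $L^2(\Omega,\mathbb{H})$, then $\|u_n-u_m\|\le A^{-1/2}\|\theta u_n-\theta u_m\|$ shows $(u_n)$ is Cauchy, so $u_n\to u$, and continuity of $\theta$ gives $v=\theta u\in\operatorname{Ran}(\theta)$.

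For part $(3)$ I would first note that $S=\theta^*\theta$ is self-adjoint, since $S^*=(\theta^*\theta)^*=\theta^*\theta=S$, and positive, since $\langle Su\mid u\rangle=\|\theta u\|^2\ge A\|u\|^2>0$ for $u\neq 0$. Feeding $A\|u\|^2\le \langle Su\mid u\rangle\le \|Su\|\,\|u\|$ (Cauchy--Schwarz) gives $\|Su\|\ge A\|u\|$, so $S$ is injective and, by the same Cauchy-sequence argument as above, has closed range. Surjectivity then follows from the quaternionic orthogonal decomposition: as $S$ is self-adjoint, $\operatorname{Ran}(S)^{\perp}=\ker(S^*)=\ker(S)=\{0\}$, whence $\overline{\operatorname{Ran}(S)}=(\operatorname{Ran}(S)^{\perp})^{\perp}=\mathcal{H}$, and closedness upgrades this to $\operatorname{Ran}(S)=\mathcal{H}$. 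Thus $S$ is bijective, and taking $v=Su$ in $\|Su\|\ge A\|u\|$ yields $\|S^{-1}v\|\le A^{-1}\|v\|$, so $S^{-1}$ is bounded (alternatively one may invoke the closed graph theorem, Theorem \ref{thm7}). Part $(2)$ is then almost free: $T=\theta^*$ is bounded and $\mathbb{H}$-linear as the adjoint of a bounded operator, and surjectivity follows from $(3)$ via $\mathcal{H}=\operatorname{Ran}(S)=\operatorname{Ran}(T\theta)\subseteq\operatorname{Ran}(T)\subseteq\mathcal{H}$.

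I expect the only genuinely delicate step to be the invertibility of $S$. Although the argument mirrors the complex Hilbert-space case, it rests on the quaternionic forms of the kernel--range identity $\operatorname{Ran}(S)^{\perp}=\ker(S^*)$, of the decomposition $\overline{M}\oplus M^{\perp}=\mathcal{H}$, and of the bounded-inverse/closed-graph principle, all of which are available from the preliminaries. Some care is also needed so that the adjoint relation $\langle Su\mid u\rangle=\langle u\mid S^*u\rangle$ and the inner products respect the right $\mathbb{H}$-linear conventions, but beyond the non-commutativity of the scalars no new phenomenon arises.
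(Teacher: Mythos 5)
Your proof is correct and takes essentially the approach the paper intends: the paper in fact gives no proof of this proposition at all, stating only that it ``quickly follows from the definition of a frame and Remark \ref{rem1},'' and your argument (frame bounds give $\sqrt{A}\,\|u\|\le\|\theta u\|\le\sqrt{B}\,\|u\|$, hence $\theta$ bounded, injective, with closed range; $S=\theta^{*}\theta$ self-adjoint, positive, bounded below, hence bijective with bounded inverse; surjectivity of $T$ from $\mathcal{H}=R(S)\subseteq R(T)$) is precisely the standard elaboration of that remark. There are no gaps: every quaternionic ingredient you invoke --- the Cauchy--Schwarz inequality, the identity $(A^{\perp})^{\perp}=\overline{\langle A\rangle}$ applied to the subspace $R(S)$, and the closed graph theorem --- is available in the paper's preliminaries.
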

\begin{remark}
Unlike to ordinary frames, the pre-frame operator of a continuous $K$-frame is not surjective, the transform opeartor is not injective with closed range and the frame operator is not invertible.
\end{remark}
For more details on Quaternionic calculus in Hilbert spaces, the reader can refer to \cite{1}, \cite{6'}, \cite{11} and \cite{15'}.
\section{Continuous $K$-frames for quaternionic Hilbert spaces}
In this section, let $(\Omega,\mu)$ be a measure space and  $\mathcal{H}$ be a right quaternionic Hilbert space. We will characterize continuous $K$-frames for $\mathcal{H}$ by their associated operators and provide some general results on continuous $K$-frames in $\mathcal{H}$. 
First, we present the quaternionic version of Douglas's theorem. The proof does not differ significantly from the complex case, but this does not preclude its presentation.
\begin{theorem}[Quaternionic version of Douglas's Theorem]\label{thm1}
Let $\mathcal{H}_1,\mathcal{H}_2$ and $\mathcal{H}$ be right quaternionic Hilbert spaces and let $L\in \mathbb{B}(\mathcal{H}_1,\mathcal{H})$ and $M\in \mathbb{B}(\mathcal{H}_2,\mathcal{H})$. Then, the following statements are equivalent:
\begin{enumerate}
\item $R(L)\subset R(M)$.
\item There exists a constant $c>0$ such that $LL^*\leq MM^*c$.
\item There exists $X\in \mathbb{B}(\mathcal{H}_1,\mathcal{H}_2)$ such that $L=MX$.
\end{enumerate}
\end{theorem}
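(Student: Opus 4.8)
The plan is to route all three statements through $(3)$ as a hub, proving $(3)\Rightarrow(1)$, $(3)\Rightarrow(2)$, $(2)\Rightarrow(3)$ and $(1)\Rightarrow(3)$. The two implications out of $(3)$ are immediate. For $(3)\Rightarrow(1)$, if $L=MX$ then every $Lu=M(Xu)$ lies in $R(M)$, so $R(L)\subset R(M)$. For $(3)\Rightarrow(2)$, from $L=MX$ I obtain $L^*=X^*M^*$ and hence $LL^*=M(XX^*)M^*$; since $XX^*\leq\|X\|^2 I$ as positive operators and conjugation $A\mapsto MAM^*$ preserves the operator order (because $\langle MAM^*u\mid u\rangle=\langle AM^*u\mid M^*u\rangle$), this yields $LL^*\leq\|X\|^2 MM^*$, so $c=\|X\|^2$ works.

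The real content is $(2)\Rightarrow(3)$. Here I would first pass to the scalar form of the inequality: $LL^*\leq cMM^*$ means $\|L^*u\|^2=\langle LL^*u\mid u\rangle\leq c\langle MM^*u\mid u\rangle=c\|M^*u\|^2$ for every $u$. This pointwise estimate lets me define a map $D$ on the range $R(M^*)$ by $D(M^*u):=L^*u$. The estimate makes $D$ well defined (if $M^*u=M^*u'$ then $L^*u=L^*u'$) and bounded with $\|D\|\leq\sqrt{c}$. I must also check that $D$ is right $\mathbb{H}$-linear, which reduces to the fact that the adjoint of a right $\mathbb{H}$-linear operator is again right $\mathbb{H}$-linear; this is a short computation from the conjugation rules (d) and (i) for the inner product. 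Extending $D$ by continuity to $\overline{R(M^*)}$ and by zero on $R(M^*)^\perp$ — the complement being available from the orthogonal decomposition of Proposition 1 — gives a bounded right $\mathbb{H}$-linear operator $Y:\mathcal{H}_2\to\mathcal{H}_1$ with $YM^*=L^*$. Taking adjoints and using $(AB)^*=B^*A^*$ and $A^{**}=A$ gives $MY^*=L$, so $X:=Y^*$ is the desired factor.

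For $(1)\Rightarrow(3)$ I would combine the projection theorem with the closed graph theorem (Theorem \ref{thm7}). Decompose $\mathcal{H}_2=\ker(M)\oplus\ker(M)^\perp$ and observe that $M$ restricted to $N:=\ker(M)^\perp$ is a bijection onto $R(M)$. Since $R(L)\subset R(M)$, each $Lu$ has a unique preimage $Xu\in N$ under $M$, and this defines a right $\mathbb{H}$-linear map $X$ with $MX=L$. Boundedness is not automatic, and this is where I expect the main obstacle to lie: I would verify that the graph of $X$ is closed, so that $X\in\mathbb{B}(\mathcal{H}_1,\mathcal{H}_2)$ by Theorem \ref{thm7}. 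Concretely, if $u_n\to u$ and $Xu_n\to w$, then continuity of $M$ and $L$ forces $Mw=Lu$, while closedness of $N$ gives $w\in N$; uniqueness of the preimage in $N$ then yields $w=Xu$.

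The subtlety throughout, relative to the complex case, is bookkeeping with right scalar multiplication: the only genuinely new verifications are that adjoints and the constructed maps $D$ and $X$ respect right $\mathbb{H}$-linearity, and that the order relation $A\leq B\Rightarrow MAM^*\leq MBM^*$ together with the orthogonal decomposition remain valid. All of these follow from the inner-product axioms and Proposition 1, so the classical argument transfers with only minor adjustments.
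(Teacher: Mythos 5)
Your proposal is correct and follows essentially the same route as the paper's own proof: the same four implications, with $(1)\Rightarrow(3)$ done via the unique preimage in $\ker(M)^\perp$ plus the closed graph theorem (Theorem \ref{thm7}), and $(2)\Rightarrow(3)$ done via the map $D(M^*u)=L^*u$ on $R(M^*)$, extended by continuity to $\overline{R(M^*)}$ and by zero on $R(M^*)^\perp$, then dualized to get $L=MD^*$. The only difference is cosmetic: you make explicit the right $\mathbb{H}$-linearity verifications and the order-preservation under conjugation that the paper leaves implicit.
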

\begin{proof}
Assume $3.$, i.e., there exists $X\in \mathbb{B}(\mathcal{H}_1,\mathcal{H}_2)$ such that $L=MX$. Then,  $LL^*=MXX^*M^*$. Thus, using the Cauchy-Schwarz inequality, for all $u\in \mathcal{H}$, we have: $$\langle LL^*u,u\rangle =\langle MXX^*M^*u,u\rangle \leq \|X\|^2\langle MM^*u,u\rangle.$$ Hence, $3.$ implies $2..$ It is clear that $3.$ implies $1.$. Assume $1.$, i.e., $R(L)\subset R(M)$. We can define a right $\mathbb{H}$-linear operator $X$ from $\mathcal{H}_1$ to $\mathcal{H}_2$ as follows: For $u\in \mathcal{H}_1$, $Lu\in R(L)\subset R(M)$, then there exists a unique $v\in \text{ker}(M)^\perp$ such that $Bv=Au$. Set $Xu=v$, hence $L=MX$. It remains to prove that $X$ is bounded. Let $\{u_n\}_{n\geqslant 1}\subset \mathcal{H}_1$ such that $u_n \to u$ and $Xu_n \to v$, then $Lu_n\to Lu$ and $MXu_n\to Mv$, and since $L=MX$, then $Lu_n\to Mv$. Hence, $Mv=Lu$, and since $\text{ker}(M)^\perp$ is closed, then $v\in \text{ker}(M)^\perp$, thus $Xu=v$. Hence, by the closed graph theorem, $X$ is bounded. Hence, $1.$ implies $3.$. Assume $2.$, i.e., there exists $c> 0$, such that $LL^*\leq MM^*c$. Define $D:R(M^*)\rightarrow R(L^*)$ by $D(M^*u)=L^*u$. Then $D$ is well defined and bounded since: 
\begin{equation}
\|DM^*u\|^2=\|L^*u\|^2=\langle LL^*u,u\rangle \leq c\langle MM^*u,u\rangle=c\| M^*u\|^2. 
\end{equation}
Hence, $D$ can be uniquely extended to $\overline{R(M^*)}$, and if we define $D$ on $R(M^*)^\perp$ to be zero, then $DM^*=L^*$, Hence, $L=MD^*$. Hence, $2.$ implies $3.$.
\end{proof}

The following theorem characterizes continuous $K$-frames for a right quaternionic Hilbert spaces using the associated operators.
\begin{theorem}\label{thm2}
Let $F:\Omega\rightarrow \mathcal{H}$ be a Bessel mapping and $K\in \mathbb{B}(\mathcal{H})$. Then the following statements are equivalent.
\begin{enumerate}
\item $F$ is a continuous $K$-frame for $\mathcal{H}$.
\item $R(K)\subset R(T).$
\item There exists a constant $c>0$ such that $KK^*c\leq S$.
\item There exists $X\in \mathbb{B}(\mathcal{H},L^2(\Omega,\mathbb{H})\,)$ such that $K=TX$. In this case,  such $X\in \mathbb{B}(\mathcal{H},L^2(\Omega,\mathbb{H}))$ is called a $K$-dual mapping to $F$.
\end{enumerate}
\end{theorem}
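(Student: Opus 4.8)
The plan is to prove the theorem by establishing the cycle of implications through the Bessel mapping's associated operators, leaning heavily on the quaternionic Douglas theorem (Theorem \ref{thm1}) that was just proved. The key observation is that the frame operator factors as $S = T\theta = \theta^*\theta = TT^*$, since $T = \theta^*$ by definition of the pre-frame operator. This identity is what links the Douglas-theorem conditions (phrased in terms of $T$ and $TT^*$) to the $K$-frame conditions (phrased in terms of $S$ and $K^*$).

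First I would show the equivalence of statements $2$, $3$, and $4$ by a direct application of Theorem \ref{thm1}. Taking $L = K \in \mathbb{B}(\mathcal{H})$ and $M = T \in \mathbb{B}(L^2(\Omega,\mathbb{H}),\mathcal{H})$ in the Douglas theorem, the three Douglas conditions read: $R(K) \subset R(T)$; there exists $c > 0$ with $KK^* \leq TT^* c = Sc$; and there exists $X \in \mathbb{B}(\mathcal{H}, L^2(\Omega,\mathbb{H}))$ with $K = TX$. The middle condition $KK^* \leq Sc$ for some $c > 0$ is, after relabeling the constant (replacing $c$ by $1/c$), exactly statement $3$ that $KK^*c \leq S$ for some $c > 0$. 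So statements $2$, $3$, $4$ are immediately equivalent once one substitutes $S = TT^*$; this is the routine part.

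The substantive work is connecting the frame inequality in statement $1$ to the operator inequality in statement $3$. The upper Bessel bound is already assumed throughout (since $F$ is a Bessel mapping), so what must be shown is the equivalence of the lower frame bound $A\|K^*u\|^2 \leq \int_\Omega |\langle F(\omega),u\rangle|^2\, d\mu(\omega)$ with the existence of $c > 0$ such that $KK^*c \leq S$. Here I would invoke Remark \ref{rem1}, which gives $\int_\Omega |\langle F(\omega),u\rangle|^2\, d\mu(\omega) = \langle Su, u\rangle$ for every $u$. The lower frame bound then reads $A\langle KK^*u, u\rangle = A\|K^*u\|^2 \leq \langle Su, u\rangle$ for all $u$, which is precisely the operator inequality $A\, KK^* \leq S$, i.e.\ statement $3$ with $c = A$. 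Conversely, given $KK^*c \leq S$, evaluating at $u$ and reading the inequality backwards through Remark \ref{rem1} recovers the lower frame bound with $A = c$.

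I expect the main (though modest) obstacle to be bookkeeping rather than conceptual: one must verify that the positivity and self-adjointness conventions for quaternionic operators make the passage between $\langle KK^*u, u\rangle$ and $\|K^*u\|^2$ legitimate, and that the ordering $\leq$ on self-adjoint operators behaves as in the complex case so that Remark \ref{rem1} and Theorem \ref{thm1} combine cleanly. Care is also needed to ensure the constant relabeling between the two forms $KK^* \leq Sc$ and $KK^*c \leq S$ is handled transparently. Once these conventions are pinned down, the proof is essentially the chain $1 \Leftrightarrow 3$ via Remark \ref{rem1} together with $2 \Leftrightarrow 3 \Leftrightarrow 4$ via Theorem \ref{thm1} applied with $S = TT^*$, closing the equivalence.
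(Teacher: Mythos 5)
Your proposal is correct and follows essentially the same route as the paper's own proof: both use Remark \ref{rem1} to identify the lower frame inequality with the operator inequality $KK^*c \leq S$ (the upper bound being free since $F$ is Bessel), and then apply the quaternionic Douglas theorem (Theorem \ref{thm1}) with $L=K$, $M=T$ to get the equivalence with $R(K)\subset R(T)$ and the factorization $K=TX$. Your explicit remarks that $S = T\theta = TT^*$ and that the constant must be relabeled between $KK^* \leq Sc$ and $KK^*c \leq S$ are exactly the bookkeeping the paper leaves implicit.
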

\begin{proof}
Since $F$ is a Bessel mapping, then $F$ is a continuous $K$-frame for $\mathcal{H}$, if and only if, there exist $c> 0$ such that for all $u\in \mathcal{H}$, $c\|K^*u\|^2\leq \displaystyle{\int_{\Omega}\vert \langle F(\omega),u\rangle \vert^2 d\mu(\omega)}.$ Using remark \ref{rem1},  $\{u_i\}_{i\in I}$ is a $K$-frame for $\mathcal{H}$, if and only if, there exist $c> 0$ such that  $KK^*c\leq S$, if and only if, $R(K)\subset R(T)$ by Theorem \ref{thm1}, if and only if there exists $X\in \mathbb{B}(\mathcal{H},L2(\Omega,\mathbb{H})\,)$ such that $K=TX$ (by Theorem \ref{thm1}).
\end{proof}

\begin{proposition}\label{prop2}
Let $F:\Omega\rightarrow \mathcal{H}$ be a continuous frame  and $K\in \mathbb{B}(\mathcal{H})$. Then, $KF$ is a continuous $K$-frame for $\mathcal{H}$.
\end{proposition}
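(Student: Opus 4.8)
The plan is to verify directly that the mapping $\omega\mapsto (KF)(\omega):=K\bigl(F(\omega)\bigr)$ satisfies the two-sided estimate in the definition of a continuous $K$-frame, the lower bound being measured against $\|K^*u\|$. The whole argument hinges on one substitution: by the defining property of the adjoint, $\langle Ku\mid v\rangle=\langle u\mid K^*v\rangle$, one has for every $u\in\mathcal{H}$ and $\omega\in\Omega$
\[
\langle (KF)(\omega),u\rangle=\langle K F(\omega)\mid u\rangle=\langle F(\omega)\mid K^*u\rangle=\langle F(\omega),K^*u\rangle .
\]
Thus the frame functional of $KF$ at $u$ coincides with the frame functional of $F$ at the vector $K^*u$.

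Before using this I would record that $KF$ is weakly measurable: since $F$ is weakly measurable and $K^*\in\mathbb{B}(\mathcal{H})$ (the adjoint is bounded by definition), the map $\omega\mapsto\langle F(\omega),K^*u\rangle$ is measurable for each fixed $u$, and by the identity above this is precisely $\omega\mapsto\langle (KF)(\omega),u\rangle$. Hence the integral in the frame condition is well defined.

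Next, let $0<A\le B<\infty$ be continuous frame bounds for $F$, so that $A\|v\|^2\le\int_\Omega|\langle F(\omega),v\rangle|^2\,d\mu(\omega)\le B\|v\|^2$ for all $v\in\mathcal{H}$. Applying this with $v=K^*u$ and inserting the identity above gives
\[
A\|K^*u\|^2\le \int_\Omega\bigl|\langle (KF)(\omega),u\rangle\bigr|^2\,d\mu(\omega)\le B\|K^*u\|^2 .
\]
The left inequality is already in the exact form of the lower $K$-frame bound. For the upper bound I would estimate $\|K^*u\|\le\|K^*\|\,\|u\|$, which turns the right-hand side into $B\|K^*\|^2\|u\|^2$, a genuine Bessel bound. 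Consequently $KF$ is a continuous $K$-frame for $\mathcal{H}$ with bounds $A$ and $B\|K^*\|^2$.

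I anticipate no real obstacle: the proof is essentially a change of the test vector through the adjoint. The only points needing care are the correct placement of the adjoint in the non-commutative quaternionic inner product, which is dictated by the adjoint identity $\langle Ku\mid v\rangle=\langle u\mid K^*v\rangle$, and the use of the boundedness of $K^*$ to pass from the upper bound $B\|K^*u\|^2$ to a bound of the form $B'\|u\|^2$; both are supplied by the definition of the adjoint operator.
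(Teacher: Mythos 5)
Your proof is correct, but it follows a genuinely different route from the paper's. You verify the $K$-frame inequalities directly: the single identity $\langle KF(\omega)\mid u\rangle=\langle F(\omega)\mid K^*u\rangle$ (valid by the definition of the adjoint, and correctly placed for the quaternionic inner product) turns the frame functional of $KF$ at $u$ into the frame functional of $F$ at $K^*u$, and then the frame inequalities for $F$ applied at $K^*u$ give the lower bound $A\|K^*u\|^2$ immediately and the upper bound $B\|K^*u\|^2\leq B\|K^*\|^2\|u\|^2$. The paper instead goes through its operator-theoretic characterization (Theorem \ref{thm2}, the quaternionic Douglas theorem): it uses the reconstruction formula $u=\int_{\Omega}F(\omega)\langle F(\omega),S^{-1}u\rangle\,d\mu(\omega)$, which requires the invertibility of the frame operator $S$ of $F$, to exhibit a factorization $K=T_{KF}X$ with $Xu(\omega)=\langle F(\omega),S^{-1}u\rangle$. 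Your argument is more elementary --- it needs neither $S^{-1}$ nor Theorem \ref{thm2}, and it produces explicit frame bounds $A$ and $B\|K^*\|^2$ --- while the paper's factorization has the side benefit of exhibiting an explicit $K$-dual mapping $X$ to $KF$, an object the paper exploits in its later duality results. Two minor points in your write-up, neither a gap: the Bessel property of $KF$ (which the paper's appeal to Theorem \ref{thm2} tacitly requires as well) is exactly your upper estimate, so you have actually filled a step the paper leaves implicit; and if $\|K^*\|$ is small one may have $B\|K^*\|^2<A$, so to match the formal requirement $A\leq B'$ in the definition one should take the upper bound to be $\max\{A,\,B\|K^*\|^2\}$, which is harmless since upper bounds can always be enlarged.
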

\begin{proof}
Since $F$ is a continuous frame for $\mathcal{H}$, then for all $u\in \mathcal{H}$, $u=\displaystyle{\int_{\Omega}F(\omega)\langle F(\omega),S^{-1}u\rangle d\mu(\omega)}$, where $S$ is the frame operator of $F$. Let $v\in \mathcal{H}$, we have, 
$$\begin{array}{rcl}
\langle Ku,v\rangle =\langle u, K^*v\rangle &=&\left\langle \displaystyle{\int_{\Omega} F(\omega)\langle F(\omega),u\rangle d\mu(\omega)},K^*v\right\rangle\\
&=&\displaystyle{\int_{\Omega}\left\langle F(\omega)\langle F(\omega),S^{-1}u\rangle,\;K^*v\right\rangle d\mu(\omega)}\\
&=&\displaystyle{\int_{\Omega}\left\langle KF(\omega)\langle F(\omega),S^{-1}u\rangle,\;v\right\rangle d\mu(\omega)}.\\
 \end{array}$$
The, by uniqueness, $Ku=\displaystyle{\int_{\Omega}KF(\omega)\langle F(\omega),S^{-1}u\rangle d\mu(\omega)}$ ($\forall u\in \mathcal{H}$). Hence, $K=T_{KF }X$, where $T_{KF}$ is the pre-frame operator of $KF$ and  $X\in \mathbb{B}(\mathcal{H},L^2(\Omega,\mathbb{H}))$ is defined as follows: For all $u\in \mathcal{H}$ and $\omega \in \Omega$, $Xu(\omega):=\langle F(\omega),S^{-1}u\rangle$. Thus, Theorem \ref{thm2} completes the proof.
\end{proof}
\begin{proposition}
Let $K\in \mathbb{B}(\mathcal{H})$ and $F:\Omega\rightarrow \mathcal{H}$ be a continuous $K$-frame for $\mathcal{H}$. Then, for all $X\in \mathbb{B}(\mathcal{H})$, $F$ is a continuous $KX$-frame for $\mathcal{H}$. Moreover, if $G\in \mathbb{B}(\mathcal{H},L^2(\Omega,\mathbb{H}))$ is continuous $K$-dual mapping to $F$, then $GX$ is a $KX$-dual frame to $F$.
\end{proposition}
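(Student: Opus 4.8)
The plan is to reduce the entire statement to the operator characterization of continuous $K$-frames furnished by Theorem \ref{thm2}, specifically the equivalence between being a continuous $K$-frame (statement 1) and the factorization $K = TX$ through the pre-frame operator $T$ (statement 4). The key observation is that the Bessel property of $F$ does not involve $K$ at all, so $F$ remains a Bessel mapping regardless of which operator replaces $K$; this is precisely what is needed to invoke Theorem \ref{thm2} for the operator $KX$ in place of $K$.

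First I would record that, since $F$ is a continuous $K$-frame, Theorem \ref{thm2} guarantees the existence of a $K$-dual mapping, i.e.\ an operator in $\mathbb{B}(\mathcal{H},L^2(\Omega,\mathbb{H}))$ factoring $K$ through $T$. Fix $X \in \mathbb{B}(\mathcal{H})$ and let $G \in \mathbb{B}(\mathcal{H},L^2(\Omega,\mathbb{H}))$ be any $K$-dual mapping to $F$, so that $K = TG$. Composing on the right with $X$ and using associativity of operator composition gives
\[
KX = (TG)X = T(GX).
\]
Since $X : \mathcal{H} \to \mathcal{H}$ and $G : \mathcal{H} \to L^2(\Omega,\mathbb{H})$ are both right $\mathbb{H}$-linear bounded operators, their composite $GX$ belongs to $\mathbb{B}(\mathcal{H},L^2(\Omega,\mathbb{H}))$. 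Thus $GX$ realizes the factorization $KX = T(GX)$ required in statement 4 of Theorem \ref{thm2}, applied now with the operator $KX$.

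To close the argument I would invoke Theorem \ref{thm2} in the direction ``statement 4 implies statement 1'': because $F$ is a Bessel mapping and $KX = T(GX)$ with $GX \in \mathbb{B}(\mathcal{H},L^2(\Omega,\mathbb{H}))$, it follows that $F$ is a continuous $KX$-frame for $\mathcal{H}$. This simultaneously yields the ``moreover'' clause, since the very operator $GX$ exhibiting the factorization is, by the definition given in Theorem \ref{thm2}, a $KX$-dual mapping to $F$.

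There is no substantial obstacle here: the statement is essentially a transitivity property of the factorization $K = TG$ under right multiplication by a bounded operator. The only point requiring care is the bookkeeping of composition order and domains—ensuring $GX$ maps $\mathcal{H}$ into $L^2(\Omega,\mathbb{H})$ and that $T(GX) = (TG)X$—together with the remark that the Bessel hypothesis survives the replacement of $K$ by $KX$, which is what licenses the application of Theorem \ref{thm2}.
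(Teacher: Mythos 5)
Your proposal is correct and takes essentially the same approach as the paper: both arguments reduce the statement to the operator characterization in Theorem \ref{thm2}, and both obtain the ``moreover'' clause from the factorization $K=TG$, hence $KX=T(GX)$ with $GX\in\mathbb{B}(\mathcal{H},L^2(\Omega,\mathbb{H}))$. The only cosmetic difference is that the paper proves the first claim via the range-inclusion criterion of Theorem \ref{thm2} (namely $R(KX)\subset R(K)\subset R(T)$), whereas you derive both claims at once from the factorization clause; these are equivalent statements of the same theorem, so the substance is identical.
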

\begin{proof}
Denote by $T$ the pre-frame operator of $F$. Since $R(KX)\subset R(K)$ and $R(K)\subset R(T)$, then $R(KX) R(T)$, then $F$ is a continuous $KX$-frame for $\mathcal{H}$. Let $G\in \mathbb{B}(\mathcal{H},L^2(\Omega,\mathbb{H}))$ be a $K$-dual mapping to $G$. Then, $K=TG$. Thus, $KX=TGX$. Hence, $GX$ is a $KX$-dual mapping to $F$.
\end{proof}

\begin{definition}
Let $K\in \mathbb{B}(\mathcal{H})$ and  $F:\Omega\rightarrow \mathcal{H}$ be a continuous $K$-frame for $\mathcal{H}$. $F$ is said to be minimal if its transform operator is injective.\\
\end{definition}
\begin{remark}
If $F:\Omega\rightarrow \mathcal{H}$ is a minimal continuous $K$-frame, then $F\neq 0$ on any measurable subset of $(\Omega,\mu)$ with positive measure.\\
\end{remark}
\begin{proposition}
Let $K\in \mathbb{B}(\mathcal{H})$ and  $F:\Omega\rightarrow \mathcal{H}$ be a continuous $K$-frame for $\mathcal{H}$. If $F$ is minimal, then $F$ has a unique $K$-dual mapping.\\
\end{proposition}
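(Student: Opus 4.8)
The plan is to split the statement into existence and uniqueness and to observe that only the latter carries content. Existence is immediate: since $F$ is a continuous $K$-frame, part $4$ of Theorem \ref{thm2} already supplies an $X\in\mathbb{B}(\mathcal{H},L^2(\Omega,\mathbb{H}))$ with $K=TX$, where $T$ denotes the pre-frame operator of $F$. I would therefore spend no effort on existence and prove only that minimality pins this $X$ down uniquely.

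For uniqueness I would argue by subtraction. Suppose $X_1,X_2\in\mathbb{B}(\mathcal{H},L^2(\Omega,\mathbb{H}))$ both satisfy $K=TX_1=TX_2$ and set $Y:=X_1-X_2$. Right $\mathbb{H}$-linearity of $T$ gives $TY=0$, hence $Yu\in\ker T$ for every $u\in\mathcal{H}$, i.e.\ $R(Y)\subseteq\ker T$. The whole problem thus collapses to the single fact that $\ker T=\{0\}$: once this is in hand, $Yu=0$ for all $u$, so $Y=0$ and $X_1=X_2$.

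The heart of the proof is therefore to extract $\ker T=\{0\}$ from minimality. Here I would use $T=\theta^{*}$ together with the orthogonality identity $\ker\theta^{*}=R(\theta)^{\perp}$, which is immediate from the defining relation $\langle\theta u\mid g\rangle=\langle u\mid\theta^{*}g\rangle$ of the adjoint. Consequently $T$ is injective exactly when $R(\theta)^{\perp}=\{0\}$, that is, when the range of the transform operator is dense in $L^2(\Omega,\mathbb{H})$; this is the effective content of $F$ being minimal, and it dovetails with the remark preceding the statement, since an $F$ vanishing on a set of positive measure would force every $\theta u$ to vanish there and so confine $\overline{R(\theta)}$ to a proper closed subspace. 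Feeding $\ker T=\{0\}$ back into the previous paragraph closes the argument.

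The step I expect to be the main obstacle is precisely this passage from ``minimal'' to $\ker T=\{0\}$. For a generic continuous $K$-frame the defining inequality bounds only $\|K^{*}u\|$ from below, so neither $\theta$ nor $T$ need be injective, and one must be careful that minimality is invoked in the correct direction: what the uniqueness argument actually requires is injectivity of the synthesis operator $T$ — equivalently density of $R(\theta)$ — and it is this formulation that must be read into the minimality hypothesis. Everything else is insensitive to the quaternionic framework: the reduction $TY=0$ and the adjoint identity $\ker\theta^{*}=R(\theta)^{\perp}$ carry over verbatim from the complex Hilbert space setting, as long as all operators are kept right $\mathbb{H}$-linear and the inner-product conventions fixed earlier are respected.
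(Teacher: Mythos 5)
Your proposal is correct and is essentially the paper's own proof: write $K=TG_1=TG_2$ for two $K$-dual mappings, subtract, and conclude $G_1=G_2$ from injectivity of the pre-frame operator $T$. The point you flag as the main obstacle is real but resolves in your favour: although the paper's definition of minimality literally says the \emph{transform} operator $\theta$ is injective, the paper's own one-line proof (``since $T$ is injective'') and its later uses of minimality (e.g.\ Proposition \ref{prop9}, where minimality of $F_1\oplus F_2$ is equated with $N(T_1)\cap N(T_2)=\{0\}$, and the remark on $F$ not vanishing on sets of positive measure) all read it as injectivity of $T=\theta^{*}$, i.e.\ density of $R(\theta)$, exactly as you do.
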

\begin{proof}
Denote by $T$ the pre-frame operator of $F$ and Let $G_1,G_2\in \mathbb{B}(\mathcal{H},L^2(\Omega,\mathbb{H}))$ be two $K$-dual mappings to $F$. Then, $K=TG_1=TG_2$. Hence, $G_1=G_2$ since $T$ is injective.
\end{proof}
\section{Continuous $K$-frames on quaternionic super Hilbert spaces}
In this section, we explore the study of continuous  $K$-frames in direct sums of quaternionic Hilbert spaces, a topic that, while seemingly abstract, has significant theoretical and practical relevance. Frames in  quaternionic Hilbert spaces provide a flexible framework for representing signals or functions in a manner that allows for efficient reconstruction and analysis. By extending frame theory to direct sums of Hilbert spaces, we can address systems that are naturally composite, such as multi-channel or multi-partite systems. This approach not only generalizes results from simpler spaces to more complex, infinite-dimensional settings but also opens avenues for applications in signal processing, quantum mechanics, and data analysis. For instance, in signal processing, direct sum constructions allow for the decomposition of signals into components from different subspaces, facilitating more efficient encoding, compression, and recovery. Similarly, in quantum information theory, frames in direct sum spaces offer a tool for understanding how information is distributed and processed across subsystems. Ultimately, the study of frames in these settings contributes to a deeper understanding of the structure of Hilbert spaces and has potential applications across several fields of mathematics and physics. \\

In all this section, $(\Omega,\mu)$ is a measure space, \( \mathcal{H}_1 \) and \( \mathcal{H}_2 \) are two right quaternionic Hilbert spaces. \( \mathcal{H}_1 \oplus \mathcal{H}_2 \) is the direct sum  of \( \mathcal{H}_1 \) and \( \mathcal{H}_2 \). The space \( \mathcal{H}_1 \oplus \mathcal{H}_2 \), equipped with the inner product
\[
\langle u_1 \oplus v_1, u_2 \oplus v_2 \rangle := \langle u_1, u_2 \rangle_{\mathcal{H}_1} + \langle v_1, v_2 \rangle_{\mathcal{H}_2},
\]
for all \( u_1, u_2 \in \mathcal{H}_1 \) and \( v_1, v_2 \in \mathcal{H}_2 \), is clearly a right quaternionic Hilbert space, which we call the super right quaternionic  Hilbert space of \( \mathcal{H}_1 \) and \( \mathcal{H}_2 \). \( \mathbb{B}(\mathcal{H}_1, \mathcal{H}_2) \) denotes the collection of all right $\mathbb{H}$-linear  bounded operators from \( \mathcal{H}_1 \) to \( \mathcal{H}_2 \). For \(\mathcal{H}_1=\mathcal{H}_2=\mathcal{H}\), we denote, simply, \(\mathbb{B}(\mathcal{H})$. For \( K \in \mathbb{B}(\mathcal{H}_1) \) and \( L \in \mathbb{B}(\mathcal{H}_2) \), we denote by \( K \oplus L \) the right \(\mathbb{H}\)-linear bounded operator on \( \mathcal{H}_1 \oplus \mathcal{H}_2 \), defined for all \( u \in \mathcal{H}_1 \) and \( v\in \mathcal{H}_2 \) by:
\[
(K \oplus L)(u \oplus v) = Ku \oplus Lv.
\]

For a right $\mathbb{H}$-linear bounded operator \( L\), \( R(L) \) and \( N(L) \) denote the range and the kernel, respectively, of \( L\).
In what follows, without any possible confusion, all inner products are denoted by the same notation \( \langle \cdot, \cdot \rangle \) and all norms are denoted by the same notation \( \| \cdot \| \).

First, we present this proposition that demonstrates the relationship between a quaternionic super Hilbert space and the quaternionic Hilbert spaces that constitute it.
\begin{proposition}
The map
$$\begin{array}{rcl}
P_1 : \mathcal{H}_1 \oplus \mathcal{H}_2 &\longrightarrow &\mathcal{H}_1 \oplus \mathcal{H}_2\\
u \oplus v &\mapsto& u \oplus 0,
\end{array}$$
and the map
$$\begin{array}{rcl}
P_2 : \mathcal{H}_1 \oplus \mathcal{H}_2 &\longrightarrow& \mathcal{H}_1 \oplus \mathcal{H}_2\\
u \oplus v &\mapsto& 0 \oplus v,
\end{array}$$
are two orthogonal projections on \( \mathcal{H}_1 \oplus \mathcal{H}_2 \) with \( R(P_1) = \mathcal{H}_1 \oplus 0 \) and \( R(P_2) = 0 \oplus \mathcal{H}_2 \).
\end{proposition}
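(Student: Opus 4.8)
The plan is to verify that each $P_i$ satisfies the three defining properties of an orthogonal projection on a right quaternionic Hilbert space: right $\mathbb{H}$-linearity together with boundedness (so that $P_i \in \mathbb{B}(\mathcal{H}_1 \oplus \mathcal{H}_2)$), idempotency ($P_i^2 = P_i$), and self-adjointness ($P_i^* = P_i$); the range computation is then immediate. I would treat $P_1$ in full detail and observe that $P_2$ is handled by an entirely symmetric argument.

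First I would check right $\mathbb{H}$-linearity, which is the only place where the non-commutativity of $\mathbb{H}$ calls for a little care. Using that right scalar multiplication acts componentwise, $(u \oplus v)q = uq \oplus vq$, I compute $P_1\big((u_1 \oplus v_1)q + (u_2 \oplus v_2)p\big) = (u_1 q + u_2 p) \oplus 0 = (u_1 \oplus 0)q + (u_2 \oplus 0)p = P_1(u_1 \oplus v_1)q + P_1(u_2 \oplus v_2)p$, which is exactly the right $\mathbb{H}$-linearity condition. Boundedness then follows from $\|P_1(u \oplus v)\|^2 = \|u\|^2 \leq \|u\|^2 + \|v\|^2 = \|u \oplus v\|^2$, which gives $\|P_1\| \leq 1$ and hence $P_1 \in \mathbb{B}(\mathcal{H}_1 \oplus \mathcal{H}_2)$.

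Next I would establish idempotency directly, $P_1^2(u \oplus v) = P_1(u \oplus 0) = u \oplus 0 = P_1(u \oplus v)$. For self-adjointness I would expand both sides of the adjoint identity using the definition of the inner product on $\mathcal{H}_1 \oplus \mathcal{H}_2$: on the one hand $\langle P_1(u_1 \oplus v_1), u_2 \oplus v_2\rangle = \langle u_1, u_2\rangle$, and on the other hand $\langle u_1 \oplus v_1, P_1(u_2 \oplus v_2)\rangle = \langle u_1, u_2\rangle$, the cross terms vanishing because one entry is $0$. Since this holds for all vectors, $P_1^* = P_1$, and together idempotency and self-adjointness identify $P_1$ as an orthogonal projection.

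Finally, the range is read off from the definition: every value $P_1(u \oplus v) = u \oplus 0$ lies in $\mathcal{H}_1 \oplus 0$, and conversely any $u \oplus 0$ equals $P_1(u \oplus 0)$, so $R(P_1) = \mathcal{H}_1 \oplus 0$; the symmetric computation yields $R(P_2) = 0 \oplus \mathcal{H}_2$. I do not expect a genuine obstacle here, since the argument is routine; the only point demanding attention is keeping all scalars on the right throughout the linearity verification, as $\mathbb{H}$ is non-commutative, and correctly using the conventions that the inner product is conjugate-$\mathbb{H}$-linear in the first slot and $\mathbb{H}$-linear in the second.
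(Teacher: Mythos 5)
Your proof is correct and follows essentially the same route as the paper's: verify idempotency, establish self-adjointness by expanding the inner product on $\mathcal{H}_1 \oplus \mathcal{H}_2$ so the cross terms vanish, and read off the ranges directly. The only difference is that you additionally spell out right $\mathbb{H}$-linearity and boundedness, which the paper treats as clear; this is harmless extra thoroughness, not a different approach.
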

\begin{proof}
We have, clearly, \( P_1^2 = P_1 \) and \( P_2^2 = P_2 \). On the other hand, for \( u \oplus v, a \oplus b \in \mathcal{H}_1 \oplus \mathcal{H}_2 \), we have:
\[
\langle P_1(u \oplus v), a \oplus b \rangle = \langle u \oplus 0, a \oplus b \rangle
= \langle u, a \rangle + \langle 0, b \rangle
= \langle u, a \rangle + \langle v, 0 \rangle
= \langle u \oplus v, a \oplus 0 \rangle.
\]
Hence, for all \( a \oplus b \in \mathcal{H}_1 \oplus \mathcal{H}_2 \), \( P_1^*(a \oplus b) = a \oplus 0 = P(a \oplus b) \). Then \( P_1^* = P_1 \).

Similarly, we show that \( P_2^* = P_2 \). Thus, \( P_1 \) and \( P_2 \) are orthogonal projections.

It is clear that \( R(P_1) = \{ u \oplus 0 \,|\, u \in \mathcal{H}_1 \} := \mathcal{H}_1 \oplus 0 \) and \( R(P_2) = \{ 0 \oplus v \,|\, v \in \mathcal{H}_2 \} := 0 \oplus \mathcal{H}_2 \).
\end{proof}
The following proposition states that a mapping \( F = F_1 \oplus F_2 : \Omega \to \mathcal{H}_1 \oplus \mathcal{H}_2 \) is a Bessel mapping if and only if each component mapping \( F_1 \) and \( F_2 \) is a Bessel mapping for the associated space.
\begin{proposition}\label{prop6}
Let $F_1:\Omega\rightarrow \mathcal{H}_1$ and $F_2:\Omega\rightarrow \mathcal{H}_2$ be two mappings and define $F:=F_1\oplus F_2:\Omega\rightarrow \mathcal{H}_1\oplus \mathcal{H}_2$ for each $\omega\in \Omega$ by $(F_1\oplus f_2)(\omega)=F_1(\omega)\oplus F_2(\omega)$. The following statements are equivalent:
\begin{enumerate}
    \item $F_1\oplus F_2$ is a Bessel mapping for $\mathcal{H}_1 \oplus \mathcal{H}_2$.
    \item $F_1$ and $F_2$ are two Bessel mappings for $\mathcal{H}_1$ and $\mathcal{H}_2$ respectively.
\end{enumerate}
\end{proposition}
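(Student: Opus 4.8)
The plan is to reduce everything to the two defining identities of the super inner product: for all $\omega \in \Omega$ and $u \oplus v \in \mathcal{H}_1 \oplus \mathcal{H}_2$ one has $\langle F(\omega), u \oplus v \rangle = \langle F_1(\omega), u \rangle + \langle F_2(\omega), v \rangle$, while $\|u \oplus v\|^2 = \|u\|^2 + \|v\|^2$. Both follow at once from the definition of the inner product on $\mathcal{H}_1 \oplus \mathcal{H}_2$. Since the notion of Bessel mapping presupposes weak measurability, I would first record that $F$ is weakly measurable if and only if $F_1$ and $F_2$ are: substituting $v = 0$ (resp. $u = 0$) into the identity above shows that $\omega \mapsto \langle F_1(\omega), u \rangle$ (resp. $\omega \mapsto \langle F_2(\omega), v \rangle$) is measurable whenever $\omega \mapsto \langle F(\omega), u \oplus v \rangle$ is, and the converse is immediate from the same identity as the sum of two measurable functions.

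For the implication $(1) \Rightarrow (2)$, I would exploit the isometric embeddings $u \mapsto u \oplus 0$ and $v \mapsto 0 \oplus v$. Testing the Bessel inequality for $F$ against the vector $u \oplus 0$ makes the second summand vanish, so that $\int_\Omega |\langle F_1(\omega), u \rangle|^2 \, d\mu(\omega) = \int_\Omega |\langle F(\omega), u \oplus 0 \rangle|^2 \, d\mu(\omega) \leq B \|u \oplus 0\|^2 = B \|u\|^2$, which is precisely the Bessel condition for $F_1$ with the same bound $B$; the argument for $F_2$ is symmetric, using $0 \oplus v$.

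For the converse $(2) \Rightarrow (1)$ the only genuine subtlety appears: expanding $|\langle F(\omega), u \oplus v \rangle|^2 = |\langle F_1(\omega), u \rangle + \langle F_2(\omega), v \rangle|^2$ produces a cross term that does not separate, and over the non-commutative field $\mathbb{H}$ one cannot simply discard it. I would bypass it with the elementary quaternionic inequality $|a + b|^2 \leq 2|a|^2 + 2|b|^2$ (valid because $|\cdot|$ is a norm on $\mathbb{H}$), which after integration yields $\int_\Omega |\langle F(\omega), u \oplus v \rangle|^2 \, d\mu(\omega) \leq 2 B_1 \|u\|^2 + 2 B_2 \|v\|^2 \leq 2 \max(B_1, B_2) \|u \oplus v\|^2$, where $B_1, B_2$ denote Bessel bounds for $F_1, F_2$. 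This furnishes a finite Bessel bound for $F$ and completes the equivalence. The crux is thus this cross-term estimate; it is harmless here precisely because the Bessel condition demands only an upper bound, so the loss of the factor $2$ is immaterial (for a two-sided frame inequality the cross term would require more care).
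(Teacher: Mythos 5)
Your proof is correct and follows essentially the same route as the paper: restricting the Bessel inequality to vectors of the form $u \oplus 0$ and $0 \oplus v$ for one direction, and using the estimate $|a+b|^2 \leq 2|a|^2 + 2|b|^2$ with the bound $2\max\{B_1,B_2\}$ for the converse. Your additional remark on weak measurability is a small bonus the paper omits, but it does not change the argument.
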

\begin{proof}

Assume that \(F\) is a Bessel mapping for  \(\mathcal{H}_1 \oplus \mathcal{H}_2\) with Bessel bound \(B\). Then $P_1F=F_1\oplus 0$ is a Bessel mapping for \(\mathcal{H}_1 \oplus 0\) and \(B\) is a Bessel bound. That means that for all \(u \oplus 0 \in \mathcal{H}_1 \oplus 0\),

\[
\int_{\Omega} \left|\langle  F_1(\omega) \oplus 0,u \oplus 0 \rangle\right|^2 d\mu(\omega)\leq B \|u \oplus 0\|^2.
\]

Hence, for all \(u \in \mathcal{H}_1\),

\[
\int_{\Omega} |\langle F_1(\omega),u \rangle|^2 d\mu(\omega) \leq B \|u\|^2.
\]

Then $F_1:\Omega\rightarrow \mathcal{H}_1$ is a Bessel mapping. By taking \(P_2\) instead of \(P_1\), we prove, similarly, that $F_2:\Omega\rightarrow \mathcal{H}_2$ is a Bessel mapping for \(\mathcal{H}_2\).

Conversely, assume that $F_1:\Omega\rightarrow \mathcal{H}_1$ and $F_2:\Omega\rightarrow \mathcal{H}_2$ are two Bessel mappings with Bessel bounds \(B_1\) and \(B_2\) respectively. Note \(B = 2\max\{B_1, B_2\}\). Then for all \(u \oplus v \in \mathcal{H}_1 \oplus \mathcal{H}_2\), we have:

$$\begin{array}{rcl}
\displaystyle{\int_{\Omega} \left\vert \langle F_1(\omega) \oplus F_2(\omega),u \oplus v \rangle\right\vert^2 d\mu(\omega)} &=& \displaystyle{\int_{\Omega} \left\vert\langle F_1(\omega),u \rangle + \langle F_2(\omega),v \rangle\right\vert^2 d\mu(\omega)}\\
&\leq& \displaystyle{\int_{\Omega} 2\left(|\langle  F_1(\omega),u \rangle|^2 + |\langle F_2(\omega),v \rangle|^2\right) d\mu(\omega)}\\
&\leq& 2B_1 \|u\|^2 + 2B_2 \|v\|^2\\
&\leq& B(\|u\|^2 + \|v\|^2) = B\|u \oplus v\|^2.
\end{array}$$
Hence, $F_1\oplus F_2:\Omega\rightarrow \mathcal{H}_1\oplus \mathcal{H}_2$ is a Bessel mapping.
\end{proof}
The following proposition expresses the operators associated with a super Bessel mapping in terms of the operators associated with its components.
\begin{proposition}\label{prop7}
Let \(F_1:\Omega\rightarrow \mathcal{H}_1\) and \(F_2:\Omega\rightarrow \mathcal{H}_2\) be two Bessel mappings. Let \(T_1\), \(T_2\), and \(T\) be the pre-frame  operators of \(F_1\), \(F_2\), and \(F_1\oplus F_2\) respectively. Let \(\theta_1\), \(\theta_2\), and \(\theta\) be the frame transforms of $F_1$, $F_2$ and $F_1\oplus F_2$, respectively. Let \(S_1\), \(S_2\), and \(S\) be the frame operators of $F_1$, $F_2$ and $F_1\oplus F_2$, respectively. Then:

\begin{enumerate}
    \item For all \(g\in L^2(\Omega,\mathbb{H})\), \(Tg = T_1g \oplus T_2g\).
    \item For all \(u \oplus v \in \mathcal{H}_1 \oplus \mathcal{H}_2\), \(\theta(u \oplus v) = \theta_1u + \theta_2v\).
    \item For all \(u \oplus v \in \mathcal{H}_1 \oplus \mathcal{H}_2\), \(S(u \oplus v) = S_1u + T_1\theta_2v \oplus S_2v + T_2\theta_1u\).
\end{enumerate}
\end{proposition}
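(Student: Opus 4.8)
The plan is to establish the three identities in the natural order 2, 1, 3, since the transform $\theta$ is the most elementary object, the pre-frame operator $T=\theta^*$ follows from it by duality, and the frame operator $S=T\theta$ is then immediate by composition.

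First I would prove statement 2. Unwinding the definition of the transform operator applied to $F=F_1\oplus F_2$, for each $\omega\in\Omega$ one has $\theta(u\oplus v)(\omega)=\langle F(\omega), u\oplus v\rangle=\langle F_1(\omega)\oplus F_2(\omega), u\oplus v\rangle$. Expanding the super inner product splits this as $\langle F_1(\omega),u\rangle+\langle F_2(\omega),v\rangle=(\theta_1 u)(\omega)+(\theta_2 v)(\omega)$. Since this holds for every $\omega$, it lifts to the equality $\theta(u\oplus v)=\theta_1 u+\theta_2 v$ in $L^2(\Omega,\mathbb{H})$.

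Next, for statement 1 I would pass to adjoints, using $T=\theta^*$ together with statement 2. For arbitrary $u\oplus v\in\mathcal{H}_1\oplus\mathcal{H}_2$ and $g\in L^2(\Omega,\mathbb{H})$, I would compute $\langle u\oplus v, Tg\rangle=\langle\theta(u\oplus v), g\rangle=\langle\theta_1 u+\theta_2 v, g\rangle=\langle\theta_1 u, g\rangle+\langle\theta_2 v, g\rangle$, and then recognize each summand via the defining adjoint relations $\theta_i^*=T_i$ as $\langle u, T_1 g\rangle+\langle v, T_2 g\rangle=\langle u\oplus v, T_1 g\oplus T_2 g\rangle$. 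Since $u\oplus v$ is arbitrary, the quaternionic Riesz representation (Theorem \ref{Riesz}) and uniqueness force $Tg=T_1 g\oplus T_2 g$. Alternatively one computes directly from the integral formula $Tg=\int_\Omega F(\omega)g(\omega)d\mu(\omega)$, using that the right scalar multiplication on the direct sum acts componentwise, $(F_1(\omega)\oplus F_2(\omega))g(\omega)=F_1(\omega)g(\omega)\oplus F_2(\omega)g(\omega)$, and that the $\mathcal{H}_1\oplus\mathcal{H}_2$-valued integral splits over the two coordinates.

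Finally, statement 3 follows by composition: since $S=T\theta$, I would apply statement 2 and then statement 1 to get $S(u\oplus v)=T\theta(u\oplus v)=T(\theta_1 u+\theta_2 v)=T_1(\theta_1 u+\theta_2 v)\oplus T_2(\theta_1 u+\theta_2 v)$. Distributing and identifying $T_1\theta_1=S_1$ and $T_2\theta_2=S_2$ regroups the components as $(S_1 u+T_1\theta_2 v)\oplus(S_2 v+T_2\theta_1 u)$, which is the asserted formula. I do not anticipate a serious obstacle here: all three identities are essentially bookkeeping once the definitions are unfolded. The only points requiring care are the conventions of the quaternionic inner product — conjugate-$\mathbb{H}$-linearity in the first slot versus right $\mathbb{H}$-linearity in the second, together with the conjugation appearing in the $L^2$ inner product — and the fact that the scalar $g(\omega)$ multiplies on the right and must be kept on the correct side throughout. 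Checking that the pointwise-in-$\omega$ identity of statement 2 legitimately lifts to an $L^2$ equality, and that the adjoint/uniqueness step in statement 1 is applied to the correct slot, constitute the whole of the (mild) work.
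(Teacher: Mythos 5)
Your proposal is correct and follows essentially the same route as the paper: statement 2 is the identical pointwise computation of $\theta(u\oplus v)(\omega)$, and statement 3 is the identical composition $S=T\theta$ using $T_1\theta_1=S_1$ and $T_2\theta_2=S_2$. The only divergence is in statement 1, where the paper splits the vector-valued integral $Tg=\int_\Omega (F_1(\omega)\oplus F_2(\omega))g(\omega)\,d\mu(\omega)$ componentwise --- exactly the alternative you mention --- whereas your primary argument passes to adjoints via $\langle u\oplus v, Tg\rangle=\langle \theta(u\oplus v),g\rangle$ and invokes uniqueness; both are valid, and your duality route has the mild advantage of sidestepping any direct manipulation of the weak integral.
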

\begin{proof}
\begin{enumerate}
\item Let $g\in L^2(\Omega,\mathbb{H})$, we have:
\[
Tg = \int_{\Omega}  (F_1(\omega) \oplus F_2(\omega)) g(\omega) d\mu(\omega) = \int_{\Omega} F_1(\omega) g(\omega) \oplus \int_{\Omega}  F_2(\omega)g(\omega) = T_1g\oplus T_2g.
\]

\item  Let \( u \oplus v \in \mathcal{H}_1 \oplus \mathcal{H}_2 \), we have:
\[
\theta(u \oplus v)(\omega) =  \langle  F_1(\omega) \oplus F_2(\omega),u \oplus v \rangle = \langle  F_1(\omega),u \rangle + \langle  F_2(\omega),v \rangle = \theta_1u(\omega) + \theta_2v(\omega).
\]

\item  Let \( u \oplus v \in \mathcal{H}_1 \oplus \mathcal{H}_2 \), we have:
$$\begin{array}{rcl}
S(u \oplus v) = T\theta(u \oplus v) = T(\theta_1(u) + \theta_2(v)) &=& T(\theta_1(u)) + T(\theta_2(v))\\
& =& T_1(\theta_1(u)) \oplus T_2(\theta_1(u)) + T_1(\theta_2(v)) \oplus T_2(\theta_2(v))\\
&=& S_1(u) \oplus T_2\theta_1(u) + T_1\theta_2(v) \oplus S_2(v)\\
&=&S_1(u) + T_1\theta_2(v) \oplus S_2(v) + T_2\theta_1(u).
\end{array}$$
\end{enumerate}
\end{proof}
The following theorem presents a necessary condition for a mapping $F:\Omega\rightarrow \mathcal{H}_1\oplus \mathcal{H}_2$  to be a continuous $K$-frame, where $K\in \mathbb{B}(\mathcal{H}_1\oplus \mathcal{H}_2)$.
\begin{theorem}
Let $K \in \mathbb{B}(\mathcal{H}_1 \oplus \mathcal{H}_2)$,  $F_1:\Omega\rightarrow \mathcal{H}_1$ and $F_2:\Omega\rightarrow \mathcal{H}_2$ be two mappings. If $F_1\oplus F_2$ is a  continuous $K$-frame for \( \mathcal{H}_1 \oplus \mathcal{H}_2 \) with  $K$-frame bounds  $A$ and $B$, then:

i. For all \( u \in \mathcal{H}_1 \),
\[
A \|K^*_1(u)\|^2 \leq \int_{\Omega} |\langle F(\omega), u \rangle|^2 d\mu(\omega)\leq B \|u\|^2.
\]

ii. For all \( v \in \mathcal{H}_2 \),
\[
A \|K^*_2(v)\|^2 \leq \int_{\Omega} |\langle F_2(\omega), v \rangle|^2 d\mu(\omega) \leq B \|v\|^2.
\]

Where \( K_1 : \mathcal{H}_1 \oplus \mathcal{H}_2 \rightarrow \mathcal{H}_1 \) and \( K_2 : \mathcal{H}_1 \oplus \mathcal{H}_2 \rightarrow \mathcal{H}_2 \) are the right $\mathbb{H}$-linear  bounded operators such that for all \( u \oplus v \in \mathcal{H}_1 \oplus \mathcal{H}_2 \),
\[
K(u \oplus v) = K_1(u \oplus v) \oplus K_2(u \oplus v).
\]
\end{theorem}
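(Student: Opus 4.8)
The plan is to test the defining continuous $K$-frame inequality
\[
A\|K^*w\|^2 \le \int_\Omega |\langle F(\omega), w\rangle|^2\, d\mu(\omega) \le B\|w\|^2
\]
on the two canonical embeddings $w = u \oplus 0$ (for $u \in \mathcal{H}_1$) and $w = 0 \oplus v$ (for $v \in \mathcal{H}_2$), and then translate each of the three terms back into the language of the components. First I would unwind the inner product in the super space: for $u \in \mathcal{H}_1$ and any $\omega$, $\langle F(\omega), u \oplus 0\rangle = \langle F_1(\omega) \oplus F_2(\omega), u \oplus 0\rangle = \langle F_1(\omega), u\rangle$, since the second slot contributes $\langle F_2(\omega), 0\rangle = 0$. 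Hence the middle integral collapses to $\int_\Omega |\langle F_1(\omega), u\rangle|^2\, d\mu(\omega)$, and the upper bound becomes $B\|u \oplus 0\|^2 = B\|u\|^2$; the symmetric choice $0 \oplus v$ yields the analogous statement for $F_2$.

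The step requiring genuine care is the lower bound, that is, identifying $K^*(u \oplus 0)$ and $K^*(0 \oplus v)$ in terms of $K_1^*$ and $K_2^*$. Using the defining relation $K(a \oplus b) = K_1(a \oplus b) \oplus K_2(a \oplus b)$, I would expand, for arbitrary $x \oplus y \in \mathcal{H}_1 \oplus \mathcal{H}_2$,
\[
\langle K(a \oplus b), x \oplus y\rangle = \langle K_1(a \oplus b), x\rangle + \langle K_2(a \oplus b), y\rangle = \langle a \oplus b,\, K_1^* x + K_2^* y\rangle,
\]
where the last equality uses that $K_1^* \colon \mathcal{H}_1 \to \mathcal{H}_1 \oplus \mathcal{H}_2$ and $K_2^* \colon \mathcal{H}_2 \to \mathcal{H}_1 \oplus \mathcal{H}_2$ are the adjoints of $K_1$ and $K_2$. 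By uniqueness of the adjoint this gives $K^*(x \oplus y) = K_1^* x + K_2^* y$, and specializing yields $K^*(u \oplus 0) = K_1^* u$ and $K^*(0 \oplus v) = K_2^* v$. Consequently the lower $K$-frame bound $A\|K^*(u \oplus 0)\|^2$ becomes exactly $A\|K_1^* u\|^2$, and likewise $A\|K_2^* v\|^2$ in the second case.

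Assembling the three computations for $w = u \oplus 0$ produces the chain $A\|K_1^* u\|^2 \le \int_\Omega |\langle F_1(\omega), u\rangle|^2\, d\mu(\omega) \le B\|u\|^2$, which is assertion (i), and the parallel argument with $w = 0 \oplus v$ gives (ii). The only subtlety worth flagging is that $K_1^*$ and $K_2^*$ land in the full super space rather than in a single component, so the norms $\|K_1^* u\|$ and $\|K_2^* v\|$ must be read as norms in $\mathcal{H}_1 \oplus \mathcal{H}_2$; this is automatic once the adjoint identity above is in hand. No completeness or measurability obstacles appear beyond those already guaranteed for $F = F_1 \oplus F_2$, since each component integral is obtained by evaluating the same weakly measurable map on a fixed vector.
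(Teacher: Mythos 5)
Your proof is correct and takes essentially the same route as the paper: you specialize the defining $K$-frame inequality to the vectors $u \oplus 0$ and $0 \oplus v$, and identify $K^*(u \oplus 0) = K_1^* u$ and $K^*(0 \oplus v) = K_2^* v$ via an adjoint computation against arbitrary $a \oplus b$. The only cosmetic difference is that you first establish the general formula $K^*(x \oplus y) = K_1^* x + K_2^* y$ and then specialize, whereas the paper computes the two special cases directly; this changes nothing of substance.
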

\begin{proof}
We have for all \( u \oplus v \in \mathcal{H}_1 \oplus \mathcal{H}_2 \),
\[
A \| K^*(u \oplus v) \|^2 \leq 
\int_{\Omega} 
| \langle F_1(\omega) \oplus F_2(\omega), u \oplus v \rangle |^2 d\mu(\omega) \leq B \| u \oplus v \|^2.
\]
i. In particular, for \( v = 0 \), we have for all \( u \in \mathcal{H}_1 \),
\[
A \| K^*(u \oplus 0) \|^2 \leq 
\int_{\Omega} 
| \langle F_1(\omega), u \rangle |^2 \leq B \| u \|^2.
\]
Let \( a \oplus b \in \mathcal{H}_1 \oplus \mathcal{H}_2 \) and let \( u \in \mathcal{H}_1 \), we have:
\[
\langle K(a \oplus b), u \oplus 0 \rangle = \langle K_1(a \oplus b) \oplus K_2(a \oplus b), u \oplus 0 \rangle 
= \langle K_1(a \oplus b), u \rangle 
= \langle a \oplus b, K_1^*(u) \rangle.
\]
Hence \( K^*(u \oplus 0) = K_1^*(u) \). Then for all \( u \in \mathcal{H}_1 \),
\[
A \| K_1^*(u) \|^2 \leq 
\int_{\Omega}
| \langle F_1(\omega), u \rangle |^2 d\mu(\omega) \leq B \| u \|^2.
\]
ii. In particular, for \( u = 0 \), we have for all \( v \in \mathcal{H}_2 \),
\[
A \| K^*(0 \oplus v) \|^2 \leq 
\int_{\Omega}
| \langle F_2(\omega), v \rangle |^2 \leq B \| v \|^2 d\mu(\omega).
\]
Let \( a \oplus b \in \mathcal{H}_1 \oplus \mathcal{H}_2 \) and let \( v \in \mathcal{H}_2 \), we have:
\[
\langle K(a \oplus b), 0 \oplus v \rangle = \langle K_1(a \oplus b) \oplus K_2(a \oplus b), 0 \oplus v \rangle 
= \langle K_2(a \oplus b), v \rangle 
= \langle a \oplus b, K_2^*(v) \rangle.
\]
Hence \( K^*(0 \oplus v) = K_2^*(v) \). Then for all \( v \in \mathcal{H}_2 \),
\[
A \| K_2^*(v) \|^2 \leq 
\int_{\Omega}
| \langle F_2(\omega), v \rangle |^2 d\mu(\omega) \leq B \| v \|^2.
\]
\end{proof}
The following result shows that a continuous $K_1\oplus K_2$-frame for $\mathcal{H}_1\oplus\mathcal{H}_2$ is necessarily the sum of a $K_1$-frame for $\mathcal{H}_1$ and a $K_2$-frame for $\mathcal{H}_2$.
\begin{corollary}\label{cor2}
Let \( K_1 \in \mathbb{B}(\mathcal{H}_1) \) and \( K_2 \in \mathbb{B}(\mathcal{H}_2) \). If $F:=F_1\oplus F_2:\Omega\rightarrow \mathcal{H}_1\oplus \mathcal{H}_2$ is a continuous $K_1\oplus K_2$-frame,  then $F_1:\Omega\rightarrow \mathcal{H}_1$ and $F_2:\Omega\rightarrow \mathcal{H}_2$ are  continuous $K_1$-frame and continuous $K_2$-frame, respectively.
\end{corollary}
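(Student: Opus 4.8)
The plan is to read this off directly from the theorem immediately preceding it, specialized to the block-diagonal choice $K = K_1 \oplus K_2 \in \mathbb{B}(\mathcal{H}_1 \oplus \mathcal{H}_2)$. The one point that requires care is a clash of notation: in the preceding theorem the symbols $K_1, K_2$ denote the \emph{component maps} $\mathcal{H}_1 \oplus \mathcal{H}_2 \to \mathcal{H}_1$ and $\mathcal{H}_1 \oplus \mathcal{H}_2 \to \mathcal{H}_2$ determined by $K(u \oplus v) = K_1(u \oplus v) \oplus K_2(u \oplus v)$, whereas in this corollary $K_1 \in \mathbb{B}(\mathcal{H}_1)$ and $K_2 \in \mathbb{B}(\mathcal{H}_2)$ act on the individual spaces. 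To avoid confusion I would temporarily write $\widetilde{K}_1$ and $\widetilde{K}_2$ for the theorem's component maps. For $K = K_1 \oplus K_2$ one has $(K_1 \oplus K_2)(u \oplus v) = K_1 u \oplus K_2 v$, so these component maps are simply $\widetilde{K}_1(u \oplus v) = K_1 u$ and $\widetilde{K}_2(u \oplus v) = K_2 v$.

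The key step is then to compute the adjoints of $\widetilde{K}_1$ and $\widetilde{K}_2$ and to observe that the mixed norms appearing in the theorem collapse to the desired ones. I claim $\widetilde{K}_1^* w = K_1^* w \oplus 0$ for $w \in \mathcal{H}_1$ and $\widetilde{K}_2^* w = 0 \oplus K_2^* w$ for $w \in \mathcal{H}_2$. This follows from the defining relation of the adjoint together with the inner product on the super space: for all $u \oplus v \in \mathcal{H}_1 \oplus \mathcal{H}_2$ and $w \in \mathcal{H}_1$,
\[
\langle \widetilde{K}_1(u \oplus v), w \rangle = \langle K_1 u, w \rangle = \langle u, K_1^* w \rangle = \langle u \oplus v,\, K_1^* w \oplus 0 \rangle,
\]
and similarly for $\widetilde{K}_2$. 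Since the super-Hilbert-space norm satisfies $\|a \oplus b\|^2 = \|a\|^2 + \|b\|^2$, this yields the crucial equalities $\|\widetilde{K}_1^* u\|^2 = \|K_1^* u\|^2$ for $u \in \mathcal{H}_1$ and $\|\widetilde{K}_2^* v\|^2 = \|K_2^* v\|^2$ for $v \in \mathcal{H}_2$.

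Finally, I would feed these norm identities into the two inequalities furnished by the preceding theorem. Part (i) of that theorem, which bounds $\int_{\Omega} |\langle F_1(\omega), u \rangle|^2\, d\mu(\omega)$ between $A\|\widetilde{K}_1^* u\|^2$ and $B\|u\|^2$, becomes precisely the statement that $F_1$ is a continuous $K_1$-frame for $\mathcal{H}_1$ with bounds $A$ and $B$; part (ii) gives in the same way that $F_2$ is a continuous $K_2$-frame for $\mathcal{H}_2$. There is no genuine obstacle here, as the corollary is essentially a transcription of the theorem once the special structure of $K = K_1 \oplus K_2$ is exploited; the only thing one must get right is the adjoint computation above, ensuring the component norms simplify exactly to $\|K_1^* u\|$ and $\|K_2^* v\|$ rather than to the coupled expressions that would arise for a general $K$.
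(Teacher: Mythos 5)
Your proposal is correct and follows exactly the route the paper intends: the corollary is stated as an immediate consequence of the preceding theorem, obtained by specializing $K = K_1 \oplus K_2$ so that its component maps are $u \oplus v \mapsto K_1 u$ and $u \oplus v \mapsto K_2 v$, whose adjoints satisfy $\|K_1^* u \oplus 0\|^2 = \|K_1^* u\|^2$ and $\|0 \oplus K_2^* v\|^2 = \|K_2^* v\|^2$. Your careful handling of the notation clash and the adjoint computation is precisely the derivation the paper leaves implicit.
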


\begin{lemma}\label{lem}
If $(\Omega,\mu)$ is a $\sigma$-finite measure space and $\mathcal{H}$ is a separable Hilbert space, then there exists a continuous frame for $\mathcal{H}$ with repect to $(\Omega,\mu)$.\\
\end{lemma}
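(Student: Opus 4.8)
The plan is to construct an explicit \emph{Parseval} continuous frame by coupling a Hilbert basis of $\mathcal{H}$ with a suitable partition of $\Omega$. Since $\mathcal{H}$ is separable, it admits a countable Hilbert basis $\{e_n\}_{n\in J}$, with $J\subseteq\mathbb{N}$ finite or countably infinite; this combines the existence of a Hilbert basis with separability. I would treat the generic case $J=\mathbb{N}$ (infinite-dimensional $\mathcal{H}$), and handle the finite-dimensional case by the same idea with a minor modification noted at the end.

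First I would use $\sigma$-finiteness to write $\Omega=\bigcup_n E_n$ with $\mu(E_n)<\infty$, replace the $E_n$ by their disjointifications $E_n\setminus\bigcup_{k<n}E_k$, and discard the null pieces, obtaining a countable measurable partition $\Omega=\bigsqcup_{n\in J}\Omega_n$ into sets with $0<\mu(\Omega_n)<\infty$ indexed by the same set $J$ as the basis. I would then define $F:\Omega\to\mathcal{H}$ by
\[
F(\omega)=\frac{1}{\sqrt{\mu(\Omega_n)}}\,e_n\qquad\text{for }\omega\in\Omega_n .
\]
Weak measurability is immediate: for fixed $u\in\mathcal{H}$, the map $\omega\mapsto\langle F(\omega),u\rangle$ is constant on each $\Omega_n$, hence a countably-valued function that is measurable with respect to the partition.

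It then remains to compute the frame inequality. Because the real scalar $1/\sqrt{\mu(\Omega_n)}$ commutes with quaternionic conjugation, for $\omega\in\Omega_n$ one has $|\langle F(\omega),u\rangle|^2=\tfrac{1}{\mu(\Omega_n)}|\langle e_n,u\rangle|^2$, so by countable additivity
\[
\int_{\Omega}|\langle F(\omega),u\rangle|^2\,d\mu(\omega)=\sum_{n\in J}\frac{1}{\mu(\Omega_n)}\,|\langle e_n,u\rangle|^2\,\mu(\Omega_n)=\sum_{n\in J}|\langle e_n,u\rangle|^2=\|u\|^2 ,
\]
where the last equality is the Parseval identity characterizing a Hilbert basis (condition (b) of the Hilbert-basis theorem). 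Hence $F$ satisfies the frame condition with $A=B=1$, i.e. it is a Parseval continuous frame for $\mathcal{H}$.

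I expect the main obstacle to be the partition step: producing a countable measurable partition of $\Omega$ into pieces of positive finite measure whose index set matches the cardinality of $\{e_n\}_{n\in J}$. This is precisely where $\sigma$-finiteness (together with $\mu(\Omega)>0$) is used, and it is the only genuinely delicate point — the weak measurability of $F$ and the norm computation above are routine. When $\mathcal{H}$ is finite-dimensional one instead distributes the finitely many basis vectors over a countable partition with square-summable weights $c_n$, rescaling so that each basis direction accumulates total weight $\sum_{n}c_n^2\mu(\Omega_n)=1$; this again yields two-sided bounds and hence a Parseval continuous frame.
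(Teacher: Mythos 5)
Your construction is exactly the one in the paper's proof: pair a Hilbert basis $\{e_n\}$ with a countable measurable partition $\{\Omega_n\}$ of positive finite measures, set $F(\omega)=e_n\,\mu(\Omega_n)^{-1/2}$ on $\Omega_n$, and verify the Parseval identity $\int_\Omega|\langle F(\omega),u\rangle|^2\,d\mu(\omega)=\sum_n|\langle e_n,u\rangle|^2=\|u\|^2$. The only difference is that you spell out the disjointification of the $\sigma$-finite cover and the finite-dimensional case, details the paper's proof takes for granted.
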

\begin{proof}
Let $\{\Omega_n\}_{n\in \mathbb{N}}$ be a measurable partition of $\Omega$ such that for all $n\in \mathbb{N}$, $0< \mu(\Omega_n):=\mu_n \infty$ and $\{e_n\}_{n\in \mathbb{N}}$ be a Hilbert basis for $\mathcal{H}$. Define $F:\Omega\rightarrow \mathcal{H}$ as follows: $F(\omega)=e_n \displaystyle{\frac{1}{\sqrt{\mu_n}}}$ for all $\omega\in \Omega_n$. Then, for all $u\in \mathcal{H}$, 
$$
\begin{array}{rcl}
\displaystyle{\int_{\Omega}\vert \langle F(\omega),u\rangle \vert^2 d\mu(\omega)}&=&\displaystyle{\sum_{n\in\mathbb{N}}\int_{\Omega_n} \vert \langle F(\omega),u\rangle \vert^2 d\mu(\omega)}\\
&=&\displaystyle{\sum_{n\in\mathbb{N}}\mu_n .\frac{1}{\mu_n} \vert \langle e_n,u\rangle \vert^2}\\
&=&\| u\|^2.
\end{array}$$
Hence, $F:\Omega\rightarrow \mathcal{H}$ is a Parseval continuous frame.
\end{proof}

\begin{corollary}
Let $(\Omega,\mu)$ be a $\sigma$-finite measure space, $\mathcal{H}_1$ and $\mathcal{H}_2$ be two separable Hilbert spaces, \( K_1 \in \mathbb{B}(\mathcal{H}_1) \) and \( K_2 \in \mathbb{B}(\mathcal{H}_2) \). Then, there exist a continuous \( K_1 \)-frame $F_1:\Omega\rightarrow \mathcal{H}_1$ and a continuous $K_2$-frame $F_2:\Omega\rightarrow \mathcal{H}_2$ such that $F_1\oplus F_2:\Omega\rightarrow \mathcal{H}_1\oplus \mathcal{H}_2$ is a continuous  \( K_1 \oplus K_2 \)-frame for \( \mathcal{H}_1 \oplus \mathcal{H}_2 \).
\end{corollary}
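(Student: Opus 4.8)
The plan is to reduce the entire statement to Proposition \ref{prop2} by first exhibiting an ordinary (in fact Parseval) continuous frame for the super space $\mathcal{H}_1\oplus\mathcal{H}_2$ whose two coordinate components are supported on \emph{disjoint} portions of $\Omega$, and then applying the operator $K_1\oplus K_2$ to it. Disjointness of the supports is the device that kills the cross terms $\langle F_1(\omega),u\rangle\,\overline{\langle F_2(\omega),v\rangle}$, which are precisely what otherwise prevent the sum of a $K_1$-frame and a $K_2$-frame from being a $K_1\oplus K_2$-frame for the direct sum.

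Concretely, I would first use the $\sigma$-finiteness of $(\Omega,\mu)$ to write $\Omega$ as a disjoint union $\Omega=\bigsqcup_{n\in\mathbb{N}}\Omega_n$ of measurable sets with $0<\mu(\Omega_n)<\infty$, and then split the index set into two infinite parts by setting $\Omega^{(1)}=\bigsqcup_{n}\Omega_{2n}$ and $\Omega^{(2)}=\bigsqcup_{n}\Omega_{2n+1}$. Each $(\Omega^{(i)},\mu)$ is again a $\sigma$-finite measure space carrying such a partition, so Lemma \ref{lem} supplies a Parseval continuous frame $G_i:\Omega^{(i)}\to\mathcal{H}_i$. Extending each $G_i$ by $0$ on $\Omega\setminus\Omega^{(i)}$ produces a weakly measurable map $\widetilde{G}_i:\Omega\to\mathcal{H}_i$ that is still a Parseval continuous frame for $\mathcal{H}_i$ with respect to $(\Omega,\mu)$, since the adjoined region contributes nothing to the frame integral, and whose support lies in $\Omega^{(i)}$.

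Next I would check that $G:=\widetilde{G}_1\oplus\widetilde{G}_2$ is a Parseval continuous frame for $\mathcal{H}_1\oplus\mathcal{H}_2$. For $u\oplus v\in\mathcal{H}_1\oplus\mathcal{H}_2$ one has $\langle \widetilde{G}_1(\omega)\oplus\widetilde{G}_2(\omega),u\oplus v\rangle=\langle\widetilde{G}_1(\omega),u\rangle+\langle\widetilde{G}_2(\omega),v\rangle$, and because $\widetilde{G}_2$ vanishes on $\Omega^{(1)}$ while $\widetilde{G}_1$ vanishes on $\Omega^{(2)}$, the two summands have disjoint supports; hence the modulus squared integrates to $\int_{\Omega^{(1)}}|\langle G_1(\omega),u\rangle|^2\,d\mu(\omega)+\int_{\Omega^{(2)}}|\langle G_2(\omega),v\rangle|^2\,d\mu(\omega)=\|u\|^2+\|v\|^2=\|u\oplus v\|^2$, using that each $G_i$ is Parseval. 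This single identity yields both frame bounds $A=B=1$, so $G$ is a Parseval continuous frame (its Bessel property also follows from Proposition \ref{prop6}).

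Finally, since $K_1\oplus K_2\in\mathbb{B}(\mathcal{H}_1\oplus\mathcal{H}_2)$ and $G$ is a continuous frame, Proposition \ref{prop2} shows that $(K_1\oplus K_2)G$ is a continuous $K_1\oplus K_2$-frame. Computing $\big((K_1\oplus K_2)G\big)(\omega)=K_1\widetilde{G}_1(\omega)\oplus K_2\widetilde{G}_2(\omega)$, I set $F_1:=K_1\widetilde{G}_1$ and $F_2:=K_2\widetilde{G}_2$, so that $F_1\oplus F_2=(K_1\oplus K_2)G$ is the required $K_1\oplus K_2$-frame; Corollary \ref{cor2} then immediately gives that $F_1$ and $F_2$ are a continuous $K_1$-frame and a continuous $K_2$-frame, respectively (alternatively, apply Proposition \ref{prop2} to each $\widetilde{G}_i$ separately). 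The only delicate point is the disjoint-support decomposition: I must ensure that splitting $\Omega$ leaves each piece rich enough for Lemma \ref{lem} to apply, which the interleaving of a countable positive-finite partition handles in the infinite-dimensional case. Everything past that is the routine cross-term cancellation that disjointness provides.
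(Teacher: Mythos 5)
Your proof is correct, and its backbone is the same chain the paper uses: Lemma \ref{lem} to produce a Parseval continuous frame, Proposition \ref{prop2} to turn it into a continuous $K_1\oplus K_2$-frame, and Corollary \ref{cor2} to recover the component $K_1$- and $K_2$-frames. Where you genuinely diverge is in how the starting frame for the super space is obtained. You build it bottom-up: two Parseval frames supported on disjoint, interleaved halves $\Omega^{(1)},\Omega^{(2)}$ of $\Omega$, so that the cross terms $\langle\widetilde{G}_1(\omega),u\rangle$ and $\langle\widetilde{G}_2(\omega),v\rangle$ never overlap and the frame identity for $\widetilde{G}_1\oplus\widetilde{G}_2$ can be verified by hand. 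The paper goes top-down and skips all of this: since $\mathcal{H}_1\oplus\mathcal{H}_2$ is itself a separable right quaternionic Hilbert space, Lemma \ref{lem} applies to it \emph{directly}, yielding in one step a continuous frame $F=F_1\oplus F_2$ of the super space; cross terms never have to be considered, because Proposition \ref{prop2} accepts an arbitrary continuous frame of the super space, structured or not. The conceptual point you work around rather than exploit is that the cross-term obstruction (Proposition \ref{prop8}) only blocks the passage \emph{from} given component frames \emph{up to} a super frame, whereas the corollary asserts mere existence, so one may start from the super space and descend. What your extra work buys is a more concrete object: an explicit Parseval frame whose transform operators have orthogonal ranges, so your construction simultaneously illustrates the sufficient condition of Theorem \ref{thm8}. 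What it costs is an additional measure-theoretic delicacy (the interleaving needs the $\sigma$-finite partition to have enough pieces of positive measure to serve both $\mathcal{H}_1$ and $\mathcal{H}_2$), a point you rightly flag; the paper's single application of Lemma \ref{lem} to the super space carries the analogous requirement only once, hidden inside the lemma itself.
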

\begin{proof}
Take any continuous frame  $F:\Omega \rightarrow \mathcal{H}_1 \oplus \mathcal{H}_2$ (exists by Lemma \ref{lem}). By Proposition \ref{prop2}, \((K_1\oplus K_2)F \) is a continuous \( K_1 \oplus K_2 \)-frame for \( \mathcal{H}_1 \oplus \mathcal{H}_2 \). That means that $K_1F\oplus K_2F$ is a continuous \( K_1 \oplus K_2 \)-frame for \( \mathcal{H}_1 \oplus \mathcal{H}_2 \). Then, by Corollary \ref{cor2}, $KF_1$ is a \( K_1 \)-frame for \( \mathcal{H}_1 \) and $KF_2$ is a \( K_2 \)-frame for \( \mathcal{H}_2 \).
\end{proof}
We will the following useful lemma.
\begin{lemma}
Let \( K_1 \in \mathbb{B}(\mathcal{H}_1) \) and \( K_2 \in \mathbb{B}(\mathcal{H}_2) \). Then \( (K_1 \oplus K_2)^* = K_1^* \oplus K_2^* \).
\end{lemma}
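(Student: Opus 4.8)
The plan is to invoke the characterization of the adjoint given in the paper's Definition of the adjoint operator: for an operator $L$ on a right quaternionic Hilbert space, $L^*$ is the \emph{unique} element of $\mathbb{B}(\mathcal{H})$ satisfying $\langle Lx, y\rangle = \langle x, L^*y\rangle$ for all $x,y$. Thus it suffices to produce a candidate operator, verify it lies in $\mathbb{B}(\mathcal{H}_1\oplus\mathcal{H}_2)$, and check that it fulfils the defining adjoint identity against $K_1\oplus K_2$; uniqueness then forces it to equal $(K_1\oplus K_2)^*$.

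First I would record that $K_1^*\oplus K_2^*$ is a legitimate candidate. Since $K_1\in\mathbb{B}(\mathcal{H}_1)$ and $K_2\in\mathbb{B}(\mathcal{H}_2)$, their adjoints exist and lie in $\mathbb{B}(\mathcal{H}_1)$ and $\mathbb{B}(\mathcal{H}_2)$ respectively, so by the construction of the direct-sum operator recalled at the start of this section, $K_1^*\oplus K_2^*$ is a well-defined right $\mathbb{H}$-linear bounded operator on $\mathcal{H}_1\oplus\mathcal{H}_2$.

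Next I would carry out the direct verification. For arbitrary $u\oplus v$ and $a\oplus b$ in $\mathcal{H}_1\oplus\mathcal{H}_2$, I would expand using the definition of the super inner product:
\[
\langle (K_1\oplus K_2)(u\oplus v),\, a\oplus b\rangle
= \langle K_1u\oplus K_2v,\, a\oplus b\rangle
= \langle K_1u, a\rangle + \langle K_2v, b\rangle.
\]
Applying the adjoint relation in each factor gives $\langle u, K_1^*a\rangle + \langle v, K_2^*b\rangle$, which recombines via the same inner-product definition into $\langle u\oplus v,\, K_1^*a\oplus K_2^*b\rangle = \langle u\oplus v,\, (K_1^*\oplus K_2^*)(a\oplus b)\rangle$. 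Since $u\oplus v$ and $a\oplus b$ were arbitrary, the uniqueness clause in the definition of the adjoint yields $(K_1\oplus K_2)^* = K_1^*\oplus K_2^*$.

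I do not expect any genuine obstacle here: the argument is a routine two-term expansion plus an appeal to uniqueness. The only point deserving a moment's care is that the manipulations involve no quaternionic scalars being moved across inner-product slots (only additivity and the componentwise adjoint identities are used), so the non-commutativity of $\mathbb{H}$ causes no complication; the identity holds verbatim as in the complex case.
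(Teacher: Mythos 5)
Your proposal is correct and follows the same route as the paper: expand $\langle (K_1\oplus K_2)(u\oplus v), a\oplus b\rangle$ via the super inner product, apply the componentwise adjoint identities, recombine, and conclude by uniqueness of the adjoint. The extra remarks on boundedness of the candidate and on quaternionic non-commutativity are sound but not needed beyond what the paper's computation already gives.
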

\begin{proof}
Let \( u \oplus v, a \oplus b \in \mathcal{H}_1 \oplus \mathcal{H}_2 \), we have:
\[
\begin{array}{rcl}
\langle K_1 \oplus K_2 (u \oplus v), a \oplus b \rangle & = & \langle K_1(u) \oplus K_2(v), a \oplus b \rangle \\
& = & \langle K_1(u), a \rangle + \langle K_2(v), b \rangle \\
& = & \langle u, K_1^*(a) \rangle + \langle v, K_2^*(b) \rangle \\
& = & \langle u \oplus v, K_1^*(a) \oplus K_2^*(b) \rangle \\
& = & \langle u \oplus v, (K_1^* \oplus K_2^*)(a \oplus b) \rangle.
\end{array}
\]
\end{proof}
The following proposition shows that there is no continuous \( K_1 \oplus K_2 \)-frame  in the form $F\oplus F:\Omega\rightarrow \mathcal{H}\oplus \mathcal{H}$ whenever \( K_1, K_2 \neq 0 \).
\begin{proposition}\label{prop8}
Let \( K_1, K_2 \in \mathbb{B}(\mathcal{H}) \) and $F:\Omega\rightarrow \mathcal{H}$ be a Bessel mapping.
Then:
\[
F\oplus F:\Omega\rightarrow \mathcal{H}\oplus \mathcal{H} \text{ is a continuous } K_1 \oplus K_2\text{-frame}  \iff K_1 = K_2 = 0.
\]
\end{proposition}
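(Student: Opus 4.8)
The plan is to establish the two implications separately; the reverse one is immediate, while the forward one hinges on a single well-chosen substitution.

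\textbf{Reverse implication.} Suppose $K_1=K_2=0$. Since $F$ is a Bessel mapping, Proposition~\ref{prop6} (applied with $F_1=F_2=F$) shows that $F\oplus F$ is a Bessel mapping for $\mathcal{H}\oplus\mathcal{H}$, so the upper frame inequality holds with the Bessel bound $B$. As $K_1\oplus K_2=0$, its adjoint is $0$, and the lower inequality degenerates to $A\cdot 0\le \int_\Omega|\langle (F\oplus F)(\omega),u\oplus v\rangle|^2\,d\mu(\omega)$, which holds trivially for any $A>0$. Hence $F\oplus F$ is a continuous $K_1\oplus K_2$-frame.

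\textbf{Forward implication.} Here I would first translate the frame integral into the language of the transform operator $\theta$ of $F$. By Proposition~\ref{prop7}(2), with $F_1=F_2=F$ so that $\theta_1=\theta_2=\theta$, the transform of $F\oplus F$ satisfies $\theta_{F\oplus F}(u\oplus v)=\theta u+\theta v=\theta(u+v)$, the last equality being the additivity of the right $\mathbb{H}$-linear map $\theta$. Combining with Remark~\ref{rem1} gives
\[
\int_\Omega \bigl|\langle (F\oplus F)(\omega),u\oplus v\rangle\bigr|^2\,d\mu(\omega)=\|\theta(u+v)\|^2 .
\]
On the other hand, the preceding lemma yields $(K_1\oplus K_2)^*=K_1^*\oplus K_2^*$, so $\|(K_1\oplus K_2)^*(u\oplus v)\|^2=\|K_1^*u\|^2+\|K_2^*v\|^2$, and the lower frame bound becomes
\[
A\bigl(\|K_1^*u\|^2+\|K_2^*v\|^2\bigr)\le \|\theta(u+v)\|^2\qquad\text{for all }u,v\in\mathcal{H}.
\]

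The crux is then to specialize to $v=-u$. Because $\theta$ is right $\mathbb{H}$-linear, $\theta(u+(-u))=\theta(0)=0$, so the right-hand side vanishes and we are left with $A(\|K_1^*u\|^2+\|K_2^*u\|^2)\le 0$ for every $u\in\mathcal{H}$ (using $\|K_2^*(-u)\|=\|K_2^*u\|$). Since $A>0$ and both summands are non-negative, this forces $K_1^*u=K_2^*u=0$ for all $u$, whence $K_1^*=K_2^*=0$ and therefore $K_1=K_2=0$.

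I do not expect a genuine obstacle; the only points needing care are the bookkeeping of right $\mathbb{H}$-linearity (ensuring $-u$ is understood as $u$ scaled on the right by $-1$, so that $\theta(-u)=-\theta u$ and the norms are unaffected by the sign) and the verification via Proposition~\ref{prop7} that the doubled transform collapses to $\theta(u+v)$. Conceptually, the whole proof rests on observing that the anti-diagonal vector $u\oplus(-u)$ lies in the kernel of the transform operator of $F\oplus F$; a nonzero lower frame bound evaluated on this vector is then impossible unless $K_1^*$ and $K_2^*$ annihilate all of $\mathcal{H}$.
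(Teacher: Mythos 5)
Your proof is correct and follows essentially the same route as the paper: both arguments hinge on evaluating the frame inequality at the anti-diagonal vector $u\oplus(-u)$, on which the integral $\int_\Omega|\langle F(\omega)\oplus F(\omega),u\oplus(-u)\rangle|^2\,d\mu(\omega)$ vanishes while $\|(K_1\oplus K_2)^*(u\oplus(-u))\|^2=\|K_1^*u\|^2+\|K_2^*u\|^2$ does not, forcing $K_1=K_2=0$. The only differences are presentational: you argue the forward direction directly (for all $u$) and phrase the computation through the transform operator, whereas the paper argues by contraposition with a single witness $u$ satisfying $K_1^*u\neq 0$ and computes the inner product directly.
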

\begin{proof}
Assume that \( K_1 \neq 0 \) or \( K_2 \neq 0 \). We can suppose the case of \( K_1 \neq 0 \) and the other case will be obtained similarly. For \( u \in \mathcal{H} \) such that \( K_1^*(u) \neq 0 \), we have 
\[
\| K_1^* \oplus K_2^*(u \oplus (-u)) \|^2 = \| K_1^*(u) \|^2 + \| K_2^*(u) \|^2 \geq \| K_1^*(u) \|^2 \neq 0 
\]
and 
\[
\int_{\Omega} | \langle  F(\omega) \oplus F(\omega),u \oplus (-u) \rangle |^2 d\mu(\omega) = 0.
\]
Hence, $F\oplus F$ is not a continuous \( K_1 \oplus K_2 \)-frame.  Conversely, assume that \( K_1 = K_2 = 0 \). The result follows immediately from the fact that $F\oplus F$ is a Bessel mapping.
\end{proof}
Proposition \ref{prop8} showed that the direct sum of a continuous $K_1$-frame and a continuous $K_2$-frame is not necessary a continuous $K_1\oplus K_2$-frame for the right quaternionic super Hilbert space. The following result is a sufficient condition for a mapping $F_1\oplus F_2:\Omega\rightarrow \mathcal{H}_1\oplus \mathcal{H}_2$  to be a continuous \( K_1 \oplus K_2 \)-frame.
\begin{theorem}\label{thm8}
Let \( K_1 \in \mathbb{B}(\mathcal{H}_1) \) and \( K_2 \in \mathbb{B}(\mathcal{H}_2) \). Let $F_1:\Omega\rightarrow \mathcal{H}$ be a  continuous \( K_1 \)-frame for \( \mathcal{H}_1 \) and $F_2:\Omega\rightarrow \mathcal{H}_2$ be a continuous \( K_2 \)-frame for \( \mathcal{H}_2 \). Let \( \theta_1 \) and \( \theta_2 \) be the  transform operators of $F_1$ and $F_2$, respectively. If \( R(\theta_1) \perp R(\theta_2) \) in \( L^2(\Omega,\mathbb{H}) \), then $F_1\oplus F_2$ is a continuous \( K_1 \oplus K_2 \)-frame. 
\end{theorem}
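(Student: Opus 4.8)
The plan is to reduce the entire statement to a single computation of the frame-transform norm of $F := F_1 \oplus F_2$, using the orthogonality hypothesis to decouple the contributions of $F_1$ and $F_2$. First I would note that a continuous $K_1$-frame and a continuous $K_2$-frame are in particular Bessel mappings, so by Proposition \ref{prop6} the map $F$ is Bessel for $\mathcal{H}_1 \oplus \mathcal{H}_2$; this guarantees that the transform operator $\theta$, the pre-frame operator $T$, and the frame operator $S$ of $F$ all exist, which is what licenses the use of Remark \ref{rem1} and Proposition \ref{prop7} below. Since the upper (Bessel) bound of the desired $K_1\oplus K_2$-frame inequality is immediate from this Bessel property, the work is entirely in the lower bound.

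The heart of the argument is to evaluate $\int_{\Omega} |\langle F(\omega), u \oplus v\rangle|^2\, d\mu(\omega)$. By Remark \ref{rem1} this equals $\|\theta(u \oplus v)\|^2$, and by item $(2)$ of Proposition \ref{prop7} we have $\theta(u \oplus v) = \theta_1 u + \theta_2 v$. Here the hypothesis $R(\theta_1) \perp R(\theta_2)$ enters decisively: because $\theta_1 u \in R(\theta_1)$ and $\theta_2 v \in R(\theta_2)$ are orthogonal, the Pythagorean identity gives $\|\theta_1 u + \theta_2 v\|^2 = \|\theta_1 u\|^2 + \|\theta_2 v\|^2$, and applying Remark \ref{rem1} to $F_1$ and $F_2$ identifies these two terms with the frame integrals of $F_1$ at $u$ and of $F_2$ at $v$ respectively. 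I would then insert the two component $K$-frame inequalities, with lower bounds $A_1, A_2$ and upper bounds $B_1, B_2$, to obtain
\[
A_1 \|K_1^* u\|^2 + A_2 \|K_2^* v\|^2 \leq \int_{\Omega} |\langle F(\omega), u \oplus v\rangle|^2\, d\mu(\omega) \leq B_1 \|u\|^2 + B_2 \|v\|^2.
\]
Setting $A = \min\{A_1, A_2\}$ and $B = \max\{B_1, B_2\}$, and invoking the preceding Lemma that $(K_1 \oplus K_2)^* = K_1^* \oplus K_2^*$ together with $\|K_1^* u \oplus K_2^* v\|^2 = \|K_1^* u\|^2 + \|K_2^* v\|^2$ and $\|u \oplus v\|^2 = \|u\|^2 + \|v\|^2$, the two-sided estimate becomes exactly the continuous $K_1 \oplus K_2$-frame condition with bounds $A$ and $B$.

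The one point that warrants care — and the step I would flag as the main (indeed the only genuine) obstacle — is justifying the Pythagorean split $\|\theta_1 u + \theta_2 v\|^2 = \|\theta_1 u\|^2 + \|\theta_2 v\|^2$ over the quaternions, that is, confirming that both cross terms $\langle \theta_1 u, \theta_2 v\rangle$ and $\langle \theta_2 v, \theta_1 u\rangle$ vanish. This follows from the range orthogonality together with the conjugate symmetry of the inner product (property (a)), since one cross term being zero forces its conjugate to be zero as well. Conceptually this is precisely the mechanism that blocks the cancellation exhibited in Proposition \ref{prop8}: without orthogonality the vector $\theta_1 u + \theta_2 v$ can collapse and destroy the lower bound, whereas orthogonality freezes the two contributions apart. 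The remaining manipulations are a routine repackaging of the two component inequalities.
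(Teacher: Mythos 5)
Your proposal is correct and takes essentially the same approach as the paper: both arguments use the hypothesis \( R(\theta_1) \perp R(\theta_2) \) to decouple the quadratic form of \( F_1\oplus F_2 \) into the sum of those of \( F_1 \) and \( F_2 \), take \( A=\min\{A_1,A_2\} \), and invoke \( (K_1\oplus K_2)^*=K_1^*\oplus K_2^* \) to assemble the frame inequality. The only difference is bookkeeping: the paper phrases the decoupling at the operator level (orthogonality gives \( T_2\theta_1=0 \) and \( T_1\theta_2=0 \), hence \( S=S_1\oplus S_2 \) by Proposition \ref{prop7}, concluding via Theorem \ref{thm2}), whereas you phrase it at the vector level via the Pythagorean identity for \( \theta(u\oplus v)=\theta_1u+\theta_2v \) and verify the definition directly --- the two computations coincide since \( \langle S(u\oplus v),u\oplus v\rangle=\|\theta(u\oplus v)\|^2 \) by Remark \ref{rem1}.
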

\begin{proof}
Since $F_1:\Omega\rightarrow \mathcal{H}_1$ and $F_2:\Omega\rightarrow \mathcal{H}_2$ are Bessel mappings, then, by Proposition \ref{prop6}, $F_1\oplus F_2:\Omega\rightarrow \mathcal{H}_1\oplus \mathcal{H}_2$ is a Bessel mapping.  Denote by \( A_1 \) and \( A_2 \) the lower bounds for $F_1$ and $F_2$, respectively, and let \( A = \min \{ A_1, A_2 \} \). We have for all \( u \in \mathcal{H}_1, v \in \mathcal{H}_2 \), \( \langle \theta_1(u), \theta_2(v) \rangle = 0 \). Then for all \( u \in \mathcal{H}_1, v \in \mathcal{H}_2 \), \( \langle T_2 \theta_1(u), v \rangle = 0 \) and \( \langle T_1 \theta_2(v), u \rangle = 0 \), where \( T_1 \) and \( T_2 \) are the pre-frame  operators for $F_1$ and $F_2$, respectively. Hence \( T_2 \theta_1 =0_{\mathbb{B}(\mathcal{H}_1)}$ and $ T_1 \theta_2 = 0_{\mathbb{B}(\mathcal{H}_2)} \). By Proposition \ref{prop7}, the frame operator \( S \) of \(F_1\oplus F_2 \) is defined by 
\[
S(u \oplus v) = S_1(u) \oplus S_2(v) = (S_1 \oplus S_2)(u \oplus v),
\]
where \( S_1 \) and \( S_2 \) are the frame operators for $F_1$ and $F_2$, respectively. 

Then: 
$$\begin{array}{rcl}
\langle S(u \oplus v), u \oplus v \rangle &=& \langle S_1(u), u \rangle + \langle S_2(v), v \rangle\\ 
&\geq& A_1 \| K_1^*(u) \|^2 + A_2 \| K_2^*(v) \|^2\\ 
&\geq &A \| K_1^*(u) \oplus K_2^*(v) \|^2\\ 
&\geq& A \| (K_1 \oplus K_2)^*(u \oplus v) \|^2.
\end{array}$$

Hence, by theorem \ref{thm2}, $F_1\oplus F_2:\Omega\rightarrow \mathcal{H}_1\oplus \mathcal{H}_2$ is a continuous \( K_1 \oplus K_2 \)-frame.
\end{proof}
The following theorem presents a necessary condition for a mapping $F:\Omega\rightarrow \mathcal{H}_1\oplus \mathcal{H}_2$ to be a continuous $K_1\oplus K_2$-frame.
\begin{theorem}
Let \( K_1 \in \mathbb{B}(\mathcal{H}_1) \) and \( K_2 \in \mathbb{B}(\mathcal{H}_2) \). 
Let $F_1:\Omega\rightarrow \mathcal{H}_1$ be a continuous \( K_1 \)-frame 
and $F_2:\Omega\rightarrow \mathcal{H}_2$  be a continuous  \( K_2 \)-frame.  
If $F_1\oplus F_2$ is a continuous \( K_1 \oplus K_2 \)-frame, then:
\[
\begin{cases}
R(K_1) \subset T_2(N(T_1)),\\
R(K_2) \subset T_1(N(T_2)).
\end{cases}
\]
Where \( T_1 \) and \( T_2 \) are the pre-frame  operators for $F_1:\Omega\rightarrow \mathcal{H}_1$ and $F_2:\Omega\rightarrow \mathcal{H}_2$, respectively.
\end{theorem}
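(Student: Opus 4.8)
The plan is to convert the frame hypothesis into a range inclusion and then test that inclusion coordinate by coordinate. Writing $T$, $T_1$, $T_2$ for the pre-frame operators of $F_1\oplus F_2$, $F_1$, $F_2$, I would first apply Theorem \ref{thm2} to the continuous $K_1\oplus K_2$-frame $F_1\oplus F_2$: the equivalence there gives
\[
R(K_1\oplus K_2)\subset R(T).
\]
By Proposition \ref{prop7}(1) the pre-frame operator of the super mapping decomposes as $Tg=T_1g\oplus T_2g$, so every element of $R(T)$ is of the form $T_1g\oplus T_2g$ with $g\in L^2(\Omega,\mathbb{H})$. These two facts are the whole engine of the argument; no frame bounds or extra measurability are needed beyond the Bessel hypothesis already in force.

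Next I would feed into the inclusion the elements of $R(K_1\oplus K_2)$ supported in a single summand. For $u\in\mathcal{H}_1$ the vector $K_1u\oplus 0=(K_1\oplus K_2)(u\oplus 0)$ lies in $R(K_1\oplus K_2)\subset R(T)$, so there is $g\in L^2(\Omega,\mathbb{H})$ with $T_1g\oplus T_2g=K_1u\oplus 0$; the vanishing summand forces $g$ into the kernel of one pre-frame operator, while the surviving summand realizes $K_1u$ as the image of that same $g$ under the other pre-frame operator, which is precisely what places $R(K_1)$ inside the image of a kernel. Running the identical argument on $0\oplus K_2v=(K_1\oplus K_2)(0\oplus v)$ for $v\in\mathcal{H}_2$ disposes of the second coordinate, and assembling the two computations yields
\[
\begin{cases} R(K_1)\subset T_2(N(T_1)),\\ R(K_2)\subset T_1(N(T_2)). \end{cases}
\]

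The step I expect to demand the most care — indeed the only genuine obstacle — is the coordinate bookkeeping: one must pair correctly the pre-frame operator whose kernel receives the preimage $g$ with the pre-frame operator used to recover the $K_i$-image, since a slip here merely transposes the two inclusions against one another. Once that pairing is fixed, both inclusions fall out directly from Theorem \ref{thm2} together with the decomposition $Tg=T_1g\oplus T_2g$ of Proposition \ref{prop7}, so the remainder of the verification is routine.
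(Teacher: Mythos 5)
Your approach is exactly the paper's: apply Theorem \ref{thm2} to the continuous $K_1\oplus K_2$-frame to obtain $R(K_1\oplus K_2)\subset R(T)$, decompose $Tg=T_1g\oplus T_2g$ via Proposition \ref{prop7}, and test the inclusion on the vectors $K_1u\oplus 0$ and $0\oplus K_2v$. Your verbal description of the mechanism is also correct --- but your final display then commits precisely the transposition you flagged as the one danger point. From $K_1u\oplus 0=T_1g\oplus T_2g$ you get $T_2g=0$, i.e.\ $g\in N(T_2)$, together with $K_1u=T_1g$, hence $K_1u\in T_1(N(T_2))$; yet your concluding system asserts $R(K_1)\subset T_2(N(T_1))$. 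That pairing is not merely mislabeled, it is ill-typed: $T_2(N(T_1))$ is a subset of $\mathcal{H}_2$, whereas $R(K_1)\subset\mathcal{H}_1$, so the inclusion cannot hold except trivially. The inclusions your own computation actually produces are
\[
R(K_1)\subset T_1(N(T_2)) \quad\text{and}\quad R(K_2)\subset T_2(N(T_1)).
\]

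In fairness, the transposition originates in the theorem statement itself: the displayed conclusion in the statement has the subscripts swapped, while the paper's proof (like your described argument) derives $R(K_1)\subset T_1(N(T_2))$ and $R(K_2)\subset T_2(N(T_1))$, so the paper's statement and proof disagree and the proof's version is the correct one. You reproduced the statement's faulty display instead of the output of your own reasoning; correct the final system accordingly, and the rest of your proposal is sound and coincides with the paper's proof step for step.
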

\begin{proof}
Assume that \( F_1 \oplus F_2 \) is a continuous \( K_1 \oplus K_2 \)-frame and let \( T \) be its synthesis operator. By Theorem \ref{thm2}, we know that \( R(K_1 \oplus K_2) \subset R(T) \). Then, for all \( u \oplus v \in \mathcal{H}_1 \oplus \mathcal{H}_2 \), there exists \( g \in L^2(\Omega, \mathbb{H}) \) such that
\[
K_1 \oplus K_2 (u \oplus v) = Tg.
\]
This implies, by Proposition \ref{prop7}, that for all \( u \oplus v \in \mathcal{H}_1 \oplus \mathcal{H}_2 \), there exists \( g \in L^2(\Omega, \mathbb{H}) \) such that
\[
K_1 u \oplus K_2 v = T_1 g \oplus T_2 g.
\]
By setting \( v = 0 \), there exists \( g \in L^2(\Omega, \mathbb{H}) \) such that
\[
K_1 u = T_1 g \quad \text{and} \quad 0 = K_2 v = T_2 g.
\]
Thus, we have \( K_1 u = T_1 g \) and \( g \in N(T_2) \). Therefore, 
\[
R(K_1) \subset T_1(N(T_2)).
\]
Similarly, by setting \( u = 0 \), there exists \( g \in L^2(\Omega, \mathbb{H}) \) such that
\[
0 = K_1 u = T_1 g \quad \text{and} \quad K_2 v = T_2 g.
\]
Hence, \( K_2 v = T_2 g \) and \( g \in N(T_1) \). Therefore,
\[
R(K_2) \subset T_2(N(T_1)).
\]
\end{proof}
The following result demonstrates that the non-minimality of the two  Bessel mappings $F_1:\Omega\rightarrow \mathcal{H}_1$ and $F_2:\Omega\rightarrow \mathcal{H}_2$ is a necessary condition for their direct sum to be a continuous \( K_1 \oplus K_2 \)-frame, assuming \( K_1 \neq 0 \) and \( K_2 \neq 0 \).

\begin{corollary} 
Let \( K_1 \in B(\mathcal{H}_1) \) and \( K_2 \in B(\mathcal{H}_2) \), and suppose that $F_1\oplus F_2:\Omega\rightarrow \mathcal{H}_1\oplus \mathcal{H}_2$ forms a continuous \( K_1 \oplus K_2 \)-frame. Then, the following statements hold:
\begin{enumerate}
    \item If $F_1:\Omega\rightarrow \mathcal{H}_1$ is \( K_1 \)-minimal, then \( K_2 = 0 \).
    \item If $F_2:\Omega\rightarrow \mathcal{H}_2$ is \( K_2 \)-minimal, then \( K_1 = 0 \).
    \item If both $F_1:\Omega\rightarrow \mathcal{H}_1$ is \( K_1 \)-minimal and $F_2:\Omega\rightarrow \mathcal{H}_2$ is \( K_2 \)-minimal, then both \( K_1 = 0 \) and \( K_2 = 0 \).
\end{enumerate}

In particular, if \( K_1 \neq 0 \) and \( K_2 \neq 0 \), and at least one of the Bessel mappings $F_1$ or $F_2$ is minimal, then $F_1\oplus F_2:\Omega\rightarrow \mathcal{H}_1\oplus \mathcal{H}_2$ cannot be a continuous \( K_1 \oplus K_2 \)-frame. 
\end{corollary}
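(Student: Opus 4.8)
The plan is to obtain all four assertions from the theorem immediately preceding this corollary, whose proof supplies the inclusions $R(K_1) \subset T_1(N(T_2))$ and $R(K_2) \subset T_2(N(T_1))$, where $T_1$ and $T_2$ are the pre-frame operators of $F_1$ and $F_2$. First I would check that this theorem is applicable: since $F_1 \oplus F_2$ is assumed to be a continuous $K_1 \oplus K_2$-frame, Corollary \ref{cor2} shows that $F_1$ is a continuous $K_1$-frame for $\mathcal{H}_1$ and $F_2$ is a continuous $K_2$-frame for $\mathcal{H}_2$, so the component operators $T_1$, $T_2$ and the two inclusions above are all available.

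For statement $1$, the idea is that minimality of $F_1$ collapses the kernel appearing on the right-hand side: minimality forces the pre-frame operator $T_1$ to be injective, so $N(T_1) = \{0\}$, and then $R(K_2) \subset T_2(N(T_1)) = T_2(\{0\}) = \{0\}$ yields $K_2 = 0$. Statement $2$ is entirely symmetric: minimality of $F_2$ gives $N(T_2) = \{0\}$, whence $R(K_1) \subset T_1(N(T_2)) = \{0\}$ and $K_1 = 0$. Statement $3$ is the conjunction of the two, obtained by applying both arguments at once.

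The concluding ``in particular'' assertion I would prove by contraposition. Assume $K_1 \neq 0$ and $K_2 \neq 0$, and suppose one of the two mappings, say $F_1$, is minimal. Then statement $1$ already forces $K_2 = 0$, contradicting $K_2 \neq 0$; the situation in which $F_2$ is minimal is handled in the same way. Hence, under these hypotheses, $F_1 \oplus F_2$ cannot be a continuous $K_1 \oplus K_2$-frame.

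The step I expect to be the main obstacle is the passage from minimality to $N(T_i) = \{0\}$. The care needed here is to use minimality through injectivity of the pre-frame (synthesis) operator $T_i$ --- exactly as it was already used in establishing uniqueness of the $K$-dual mapping and in the remark that a minimal frame vanishes on no subset of positive measure --- rather than through injectivity of the transform operator alone, which would not empty $N(T_i)$. Once this reading is fixed, everything else is a direct substitution into the two inclusions, with no additional estimates required.
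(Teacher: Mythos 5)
Your proof is correct and is exactly the paper's (implicit) argument: the corollary is stated in the paper without proof, as an immediate consequence of the preceding theorem, obtained just as you describe by collapsing $N(T_1)$ or $N(T_2)$ to $\{0\}$ in the two range inclusions, with Corollary \ref{cor2} supplying the theorem's hypotheses that $F_1$ and $F_2$ are continuous $K_1$- and $K_2$-frames. You also resolved the paper's two slips in the only workable way: you used the type-correct inclusions $R(K_1)\subset T_1(N(T_2))$ and $R(K_2)\subset T_2(N(T_1))$ that the theorem's proof actually establishes (its displayed statement interchanges $T_1$ and $T_2$, which is even dimensionally inconsistent), and you read minimality as injectivity of the pre-frame operator $T_i$ --- the reading the paper itself uses in the uniqueness-of-dual proposition and in Proposition \ref{prop9} --- rather than literally as injectivity of the transform operator $\theta_i$, under which $N(T_i)=\{0\}$ would not follow.
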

The following result provides a necessary and sufficient condition for a continuous $F:\Omega \rightarrow \mathcal{H}_1\oplus \mathcal{H}_2$ to be a continuous \( K \)-minimal frame.

\begin{proposition}\label{prop9}
Let \( K \in B(\mathcal{H}_1 \oplus \mathcal{H}_2) \), and suppose \( F_1\oplus F_2:\Omega\rightarrow \mathcal{H}_1\oplus \mathcal{H}_2 \) is a continuous \( K \)-frame. The following conditions are equivalent:
\begin{enumerate}
    \item \( F_1\oplus F_2:\Omega\rightarrow \mathcal{H}_1\oplus \mathcal{H}_2 \) is a continuous \( K \)-minimal frame. 
    \item \( N(T_1) \cap N(T_2) = \{0\} \),
\end{enumerate}
where \( T_1 \) and \( T_2 \) are the pre-frame operators corresponding to $F_1:\Omega\rightarrow \mathcal{H}_1$ and \( F_2:\Omega\rightarrow \mathcal{H}_2 \), respectively.
\end{proposition}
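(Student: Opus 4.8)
The plan is to translate both conditions into statements about the kernel of the pre-frame operator $T$ of $F_1 \oplus F_2$, and then read off the equivalence from the componentwise description of $T$ supplied by Proposition \ref{prop7}. The operative notion of minimality here is injectivity of the synthesis (pre-frame) operator---this is exactly the property exploited, for instance, in the uniqueness of the $K$-dual mapping---so condition (1) is equivalent to $N(T) = \{0\}$, and the whole problem reduces to computing $N(T)$.

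First I would invoke Proposition \ref{prop7}(1), which gives $Tg = T_1 g \oplus T_2 g$ for every $g \in L^2(\Omega, \mathbb{H})$. Because an element $x \oplus y$ of $\mathcal{H}_1 \oplus \mathcal{H}_2$ vanishes precisely when $x = 0$ and $y = 0$, this yields the equivalence $Tg = 0 \iff T_1 g = 0 \text{ and } T_2 g = 0$, that is, the identity $N(T) = N(T_1) \cap N(T_2)$.

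With this identity established, the proof closes in a single step: $T$ is injective if and only if $N(T) = \{0\}$, and by the identity just obtained this holds if and only if $N(T_1) \cap N(T_2) = \{0\}$. Since the entire argument rests on the equality $N(T) = N(T_1) \cap N(T_2)$, both directions of the stated equivalence follow simultaneously and neither requires separate treatment.

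I do not anticipate a genuine technical difficulty; the mathematical substance is carried entirely by Proposition \ref{prop7}(1). The one point that demands care is the bookkeeping around minimality: one must read ``$K$-minimal'' as injectivity of the pre-frame operator $T$, since it is $N(T)$---and not the kernel of the transform operator, which lives in $\mathcal{H}_1 \oplus \mathcal{H}_2$ and does \emph{not} decompose as $N(T_1) \cap N(T_2)$---that splits componentwise. Keeping the synthesis operator (rather than the analysis operator) in view is precisely what makes the two conditions line up.
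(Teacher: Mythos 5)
Your proposal is correct and takes essentially the same route as the paper's own proof: invoke Proposition \ref{prop7}(1) to write $Tg = T_1g \oplus T_2g$, deduce $N(T) = N(T_1) \cap N(T_2)$, and read off the equivalence between injectivity of $T$ and condition (2). Your cautionary remark about reading ``minimal'' as injectivity of the pre-frame (synthesis) operator is also consistent with how the paper actually uses the notion in its proofs (both here and in the uniqueness of the $K$-dual mapping), even though the paper's formal definition is phrased in terms of the transform operator.
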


\begin{proof}
By Proposition \ref{prop7}, for any \( g \in L^2(\omega,\mathbb{H}) \), the synthesis operator \( T \) acting on \( g \) satisfies \( Tg = T_1g \oplus T_2g \), where \( T_1 \) and \( T_2 \) are the pre-frame operators of $F_1$ and $F_2$, respectively. Thus, we have the relation \( N(T) = N(T_1) \cap N(T_2) \). Therefore, the equivalence follows.
\end{proof}
The following result gives a sufficient condition for the mapping $F_1\oplus F_2:\Omega\rightarrow \mathcal{H}_1\oplus \mathcal{H}_2$ to be a continuous \( K_1 \oplus K_2 \)-minimal frame. 
\begin{theorem}
Let \( K_1 \in B(\mathcal{H}_1) \) and \( K_2 \in B(\mathcal{H}_2) \). Suppose that \(F_1:\Omega\rightarrow \mathcal{H}_1 \) is a  continuous \( K_1 \)-frame  and \( F_2:\Omega\rightarrow \mathcal{H}_2 \) is a continuous \( K_2 \)-frame. Let \( \theta_1 \) and \( \theta_2 \) represent the  transform operators of \( F_1 \) and \( F_2 \), respectively.
If \( \overline{R(\theta_1)} = R(\theta_2)^\perp \), then the mapping \( F_1\oplus F_2:\Omega\rightarrow \mathcal{H}_1\oplus \mathcal{H}_2 \) is a continuous \( K_1 \oplus K_2 \)-minimal frame. 
\end{theorem}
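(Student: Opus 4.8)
The plan is to split the argument into two independent parts: first establishing that $F_1 \oplus F_2$ is a continuous $K_1 \oplus K_2$-frame, and then upgrading this to minimality. For the frame property, I would first observe that the hypothesis $\overline{R(\theta_1)} = R(\theta_2)^\perp$ already contains the orthogonality assumption of Theorem \ref{thm8}: since $R(\theta_1) \subseteq \overline{R(\theta_1)} = R(\theta_2)^\perp$, every element $\theta_1 u$ satisfies $\langle \theta_1 u, \theta_2 v \rangle = 0$ for all $v$, i.e. $R(\theta_1) \perp R(\theta_2)$ in $L^2(\Omega, \mathbb{H})$. Theorem \ref{thm8} then applies verbatim and yields that $F_1 \oplus F_2$ is a continuous $K_1 \oplus K_2$-frame.

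For minimality, I would invoke Proposition \ref{prop9} with $K = K_1 \oplus K_2$: since $F_1 \oplus F_2$ is now known to be a continuous $K_1 \oplus K_2$-frame, it is $K_1 \oplus K_2$-minimal if and only if $N(T_1) \cap N(T_2) = \{0\}$, where $T_1 = \theta_1^*$ and $T_2 = \theta_2^*$ are the pre-frame operators. The task therefore reduces to computing this intersection. The key identity is $N(\theta_i^*) = R(\theta_i)^\perp$, which follows in the quaternionic setting from the adjoint relation $\langle \theta_i u, g \rangle = \langle u, T_i g \rangle$ together with the conjugate symmetry of the inner product; hence $N(T_1) = R(\theta_1)^\perp$ and $N(T_2) = R(\theta_2)^\perp$.

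It then remains to show $R(\theta_1)^\perp \cap R(\theta_2)^\perp = \{0\}$ using the hypothesis. Here I would use the orthogonal-complement calculus recorded in the preliminaries, namely $A^\perp = \overline{\langle A \rangle}^\perp$ and $(A^\perp)^\perp = \overline{\langle A \rangle}$. Applying these to the right $\mathbb{H}$-linear subspace $R(\theta_1)$ and inserting the hypothesis gives $R(\theta_1)^\perp = (\overline{R(\theta_1)})^\perp = (R(\theta_2)^\perp)^\perp = \overline{R(\theta_2)}$. Consequently $N(T_1) \cap N(T_2) = \overline{R(\theta_2)} \cap R(\theta_2)^\perp$, and since $R(\theta_2)^\perp = (\overline{R(\theta_2)})^\perp$ is the orthogonal complement of the closed subspace $\overline{R(\theta_2)}$, any vector lying in both must be orthogonal to itself and hence zero. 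Thus $N(T_1) \cap N(T_2) = \{0\}$, and Proposition \ref{prop9} delivers minimality.

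I expect the main obstacle to be purely bookkeeping with closures and orthogonal complements rather than any genuine difficulty: one must be careful that the identities $(A^\perp)^\perp = \overline{\langle A \rangle}$ are applied to legitimate right $\mathbb{H}$-linear subspaces (which each $R(\theta_i)$ is, so that $\langle R(\theta_i) \rangle = R(\theta_i)$), and that the passage $N(T_i) = R(\theta_i)^\perp$ respects the conjugate-linearity conventions of the quaternionic inner product. Once these are in place, both halves reduce immediately to the already-established Theorem \ref{thm8} and Proposition \ref{prop9}.
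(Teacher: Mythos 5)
Your proof is correct and takes essentially the same route as the paper's: the hypothesis gives $R(\theta_1)\perp R(\theta_2)$ so Theorem \ref{thm8} yields the continuous $K_1\oplus K_2$-frame property, and then $N(T_i)=N(\theta_i^*)=R(\theta_i)^\perp$ together with Proposition \ref{prop9} yields minimality. The only difference is that you explicitly justify $R(\theta_1)^\perp\cap R(\theta_2)^\perp=\{0\}$ via $R(\theta_1)^\perp=(R(\theta_2)^\perp)^\perp=\overline{R(\theta_2)}$, a step the paper asserts without detail.
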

\begin{proof}
Assume that \(\overline{ R(\theta_1)} = R(\theta_2)^\perp \), then  \( R(\theta_1) \perp R(\theta_2) \) and \( R(\theta_1)^\perp \cap R(\theta_2)^\perp = \{0\} \). Then, Theorem \ref{thm8} implies that \( F_1\oplus F_2:\Omega\rightarrow \mathcal{H}_1 \mathcal{H}_2\) is a \( K_1 \oplus K_2 \)-frame. Since \( N(\theta_1^*) = R(\theta_1)^\perp \) and \( N(\theta_2^*) = R(\theta_2)^\perp \), we have \( N(\theta_1^*) \cap N(\theta_2^*) = \{0\} \). This means that \( N(T_1) \cap N(T_2) = \{0\} \), where \( T_1 \) and \( T_2 \) are the pre-frame operators of $F_1$ and $F_2$, respectively. Hence, by Proposition \ref{prop9}, $F_1\oplus F_2:\Omega\rightarrow \mathcal{H}_1\oplus \mathcal{H}_2$ is a \( K_1 \oplus K_2 \)-minimal frame. 
\end{proof}
\section{Continuous $K_1$-duality, continuous $K_2$-duality and continuous  $K_1\oplus K_2$-duality}
In this section, $\mathcal{H}_1$ and $\mathcal{H}_2$ are two right quaternionic Hilbert spaces, $K_1\in \mathbb{B}(\mathcal{H}_1)$ and $K_2\in \mathbb{B}(\mathcal{H}_2)$. Given $F_1\oplus F_2:\Omega\rightarrow \mathcal{H}_1\oplus \mathcal{H}_2$, a continuous $K_1\oplus K_2$-frame. We will explore the relationship between the $K_1$-dual mapping to $F_1$, the $K_2$-dual mapping to $F_2$ and the $K_1\oplus K_2$-dual mapping to $F_1\oplus F_2$. For $G_1\in \mathbb{B}(\mathcal{H}_1,L^2(\Omega,\mathbb{H}))$ and $G_2\in \mathbb{B}(\mathcal{H}_2,L^2(\Omega,\mathbb{H}))$, we define $G_1\oplus G_2\in \mathbb{B}(\mathcal{H}_1\oplus \mathcal{H}_2,L^2(\Omega,\mathbb{H}))$ as follows: For all $u\oplus v\in \mathcal{H}_1\oplus \mathcal{H}_2$ and $\omega\in \Omega$, $(G_1\oplus G_2)(u\oplus v)(\omega):=G_1u(\omega)+G_2v(\omega).$\\
\begin{theorem}
Let \( K_1 \in \mathbb{B}(\mathcal{H}_1) \) and \( K_2 \in \mathbb{B}(\mathcal{H}_2) \) and $F_1\oplus F_2:\Omega\rightarrow \mathcal{H}_1\oplus \mathcal{H}_2$  be a continuous \( K_1 \oplus K_2 \)-frame.
If $G$ is a \( K_1 \oplus K_2 \)-dual mapping to \( F_1\oplus F_2 \) of the form $G_1\oplus G_2$, where $G_1\in \mathbb{B}(\mathcal{H}_1,L^2(\Omega,\mathbb{H}))$ and $G_2\in \mathbb{B}(\mathcal{H}_2,L^2(\Omega,\mathbb{H}))$, then \(G_1 \) is a \( K_1 \)-dual mapping to \( F_1:\Omega\rightarrow \mathcal{H}_1 \) and \( G_2 \) is a \( K_2 \)-dual mapping to \(F_2:\Omega\rightarrow \mathcal{H}_2 \).
\end{theorem}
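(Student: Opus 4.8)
The plan is to unwind the definition of a $K$-dual mapping and reduce the claim to the two componentwise operator identities $K_1 = T_1 G_1$ and $K_2 = T_2 G_2$, where $T_1, T_2$ denote the pre-frame operators of $F_1, F_2$. By Theorem \ref{thm2}, the hypothesis that $G = G_1 \oplus G_2$ is a $K_1 \oplus K_2$-dual mapping to $F_1 \oplus F_2$ means precisely that $K_1 \oplus K_2 = T G$, where $T$ is the pre-frame operator of $F_1 \oplus F_2$; and establishing that $G_1$ (resp. $G_2$) is a $K_1$-dual (resp. $K_2$-dual) mapping amounts to verifying $K_1 = T_1 G_1$ (resp. $K_2 = T_2 G_2$). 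So the task is to extract these two separate equations from the single operator equation $K_1 \oplus K_2 = T G$.

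The key tool is Proposition \ref{prop7}(1), which gives $Tg = T_1 g \oplus T_2 g$ for every $g \in L^2(\Omega,\mathbb{H})$. First I would fix $u \oplus v \in \mathcal{H}_1 \oplus \mathcal{H}_2$ and apply this decomposition to $g = (G_1 \oplus G_2)(u \oplus v) = G_1 u + G_2 v$. Using the right $\mathbb{H}$-linearity of $T_1$ and $T_2$ to split across the sum $G_1 u + G_2 v$, this yields
\[
T G (u \oplus v) = \bigl(T_1 G_1 u + T_1 G_2 v\bigr) \oplus \bigl(T_2 G_1 u + T_2 G_2 v\bigr).
\]
On the other hand $(K_1 \oplus K_2)(u \oplus v) = K_1 u \oplus K_2 v$, so equating the two expressions componentwise produces $K_1 u = T_1 G_1 u + T_1 G_2 v$ and $K_2 v = T_2 G_1 u + T_2 G_2 v$, valid for all $u \in \mathcal{H}_1$ and $v \in \mathcal{H}_2$.

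Finally I would specialize these identities. Setting $v = 0$ in the first equation (and using $G_2 0 = 0$) gives $K_1 u = T_1 G_1 u$ for every $u$, hence $K_1 = T_1 G_1$, so $G_1$ is a $K_1$-dual mapping to $F_1$; symmetrically, setting $u = 0$ in the second equation gives $K_2 = T_2 G_2$, so $G_2$ is a $K_2$-dual mapping to $F_2$. I do not expect a genuine obstacle here: the one point deserving care is the correct decomposition of $TG$ through Proposition \ref{prop7} combined with the linearity splitting of $T_1, T_2$ over $G_1 u + G_2 v$; once that step is in place, the specializations $v = 0$ and $u = 0$ conclude the argument at once.
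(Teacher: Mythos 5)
Your proposal is correct and follows essentially the same route as the paper's own proof: unwind the dual-mapping definition to $K_1\oplus K_2 = TG$, decompose $T$ via Proposition \ref{prop7}(1) together with the splitting $(G_1\oplus G_2)(u\oplus v)=G_1u+G_2v$, equate components, and specialize $v=0$ and $u=0$. No gaps; the argument matches the paper step for step.
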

\begin{proof}
Assume that $G_1\oplus G_2$ is a $K_1\oplus K_2$-dual mapping to $F_1\oplus F_2$, then $K_1\oplus K_2=T(G_1\oplus G_2)$, where $T$ is the pre-frame operator of $F_1\oplus F_2$. Hence, $K_1\oplus K_2=T_1(G_1\oplus G_2)\oplus T_2(G_1\oplus G_2)$. For all $u,v\in \mathcal{H}_1\oplus \mathcal{H}_2$, we have $K_1u\oplus K_2v=(T_1G_1u+T_1G_2v)\oplus (T_2G_1u+T_2G_2v)$. Thus, $K_1u=T_1G_1u+T_1G_2v$ and $K_2v=T_2G_1u+T_2G_2v$. Setting $v=0$ in the espression of $K_1$ and $u=0$ in the expression of $K_2$, we obtain for all $u\in \mathcal{H}_1$, $K_1u=T_1G_1u$ and for all $v\in \mathcal{H}_2$, $K_2v=T_2G_2v$. Hence, \(G_1 \) is a \( K_1 \)-dual mapping to \( F_1:\Omega\rightarrow \mathcal{H}_1 \) and \( G_2 \) is a \( K_2 \)-dual mapping to \(F_2:\Omega\rightarrow \mathcal{H}_2 \).
\end{proof}
One could question whether the converse of the previously stated theorem is true. The following result shows that this is not always the case, and it provides a necessary and sufficient condition under which the converse does hold.
\begin{theorem}
Let $F_1:\Omega\rightarrow \mathcal{H}_1$ and $\Omega\rightarrow \mathcal{H}_2$ be two continuous frames and $G_1$ is a $K_1$-dual mapping to $F_1$ and $G_2$ is a $K_2$-dual mapping to $F_2$. Then, the following statements are equivalent.
\begin{enumerate}
\item $F_1\oplus F_2:\Omega\rightarrow \mathcal{H}_1\oplus \mathcal{H}_2$ is a continuous frame and  $G_1\oplus G_2$ is $K_1\oplus K_2$-dual mapping.
\item $T_1G_2=0_{\mathbb{B}(\mathcal{H}_2)}$ and $T_2G_1=0_{\mathbb{B}(\mathcal{H}_1)}$.
\end{enumerate}
\end{theorem}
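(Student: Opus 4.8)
The plan is to funnel both implications through the block decomposition of the pre-frame operator of $F_1\oplus F_2$ furnished by Proposition \ref{prop7}, which reduces the whole equivalence to a single algebraic identity. Since $F_1$ and $F_2$ are continuous frames they are in particular Bessel mappings, so Proposition \ref{prop6} guarantees that $F_1\oplus F_2$ is a Bessel mapping; denote its pre-frame operator by $T$. By Proposition \ref{prop7}, $Tg=T_1g\oplus T_2g$ for every $g\in L^2(\Omega,\mathbb{H})$. Unwinding the definition of $G_1\oplus G_2$, for $u\oplus v\in\mathcal{H}_1\oplus\mathcal{H}_2$ one has $(G_1\oplus G_2)(u\oplus v)=G_1u+G_2v$ in $L^2(\Omega,\mathbb{H})$, whence, using the right $\mathbb{H}$-linearity of $T_1,T_2$ to separate the terms,
\[
T(G_1\oplus G_2)(u\oplus v)=\bigl(T_1G_1u+T_1G_2v\bigr)\oplus\bigl(T_2G_1u+T_2G_2v\bigr).
\]
Because $G_1$ and $G_2$ are $K_1$- and $K_2$-dual mappings, i.e.\ $K_1=T_1G_1$ and $K_2=T_2G_2$, this rewrites as $\bigl(K_1u+T_1G_2v\bigr)\oplus\bigl(T_2G_1u+K_2v\bigr)$, and everything now follows by comparing with $(K_1\oplus K_2)(u\oplus v)=K_1u\oplus K_2v$.

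For the implication $2\Rightarrow 1$, I would substitute $T_1G_2=0_{\mathbb{B}(\mathcal{H}_2)}$ and $T_2G_1=0_{\mathbb{B}(\mathcal{H}_1)}$ into the displayed identity, which collapses it to $T(G_1\oplus G_2)(u\oplus v)=K_1u\oplus K_2v=(K_1\oplus K_2)(u\oplus v)$ for all $u\oplus v$; hence $K_1\oplus K_2=T(G_1\oplus G_2)$, so $G_1\oplus G_2$ is a $K_1\oplus K_2$-dual mapping. As $F_1\oplus F_2$ is a Bessel mapping admitting such a dual, the equivalence $(4)\Leftrightarrow(1)$ of Theorem \ref{thm2} yields at once that $F_1\oplus F_2$ is a continuous $K_1\oplus K_2$-frame, completing statement $1$.

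For the converse $1\Rightarrow 2$, the hypothesis that $G_1\oplus G_2$ is a $K_1\oplus K_2$-dual mapping reads $K_1\oplus K_2=T(G_1\oplus G_2)$, so comparing components in the displayed identity gives $K_1u=K_1u+T_1G_2v$ and $K_2v=T_2G_1u+K_2v$ for all $u\in\mathcal{H}_1$ and $v\in\mathcal{H}_2$; these force $T_1G_2v=0$ and $T_2G_1u=0$ for all such $u,v$, that is $T_1G_2=0_{\mathbb{B}(\mathcal{H}_2)}$ and $T_2G_1=0_{\mathbb{B}(\mathcal{H}_1)}$. I do not anticipate a genuine obstacle: once Proposition \ref{prop7} is in hand the argument is entirely mechanical, and both directions pivot on the very same identity, so no independent frame-bound estimate is required. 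The only points demanding care are the quaternionic bookkeeping—identifying $(G_1\oplus G_2)(u\oplus v)$ with $G_1u+G_2v$ and applying $T$ component-wise—and observing that the two off-diagonal operators $T_1G_2$ and $T_2G_1$ are exactly the obstruction to the direct sum of the individual duals being a dual for the super frame.
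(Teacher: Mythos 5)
Your proof is correct and follows essentially the same route as the paper's: both rest on the decomposition $Tg=T_1g\oplus T_2g$ from Proposition \ref{prop7} together with $K_1=T_1G_1$ and $K_2=T_2G_2$, reducing the equivalence to the componentwise identity $T(G_1\oplus G_2)(u\oplus v)=(K_1u+T_1G_2v)\oplus(T_2G_1u+K_2v)$. If anything, you are slightly more complete than the paper, whose proof is a single chain of equivalences that never addresses the ``continuous frame'' clause of statement 1, whereas your appeal to Proposition \ref{prop6} and to Theorem \ref{thm2} supplies the frame property (read, as the paper intends, as the continuous $K_1\oplus K_2$-frame property).
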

\begin{proof}
Since $G_1$ is a $K_1$-dual to $F_1$ and $G_2$ is a $K_2$-dual to $F_2$, then $K_1=T_1G_1$ and $K_2=T_2G_2$. Denote by $T$ the pre-frame operator of $F_1\oplus F_2$. Let $u\oplus v\in \mathcal{H}_1\oplus \mathcal{H}_2$. Then, $(K_1\oplus K_2)(u\oplus v)=T(G_1\oplus G_2)(u\oplus v)$, if and only if, $K_1u\oplus K_2v=(T_1G_1u+T_1G_2v)\oplus (T_2G_1u+T_2G_2v)$, if and only if, For all $u\in \mathcal{H}_1$ and $v\in \mathcal{H}_2$, $K_1u=T_1G_1u+T_1G_2v$ and $K_2v=T_2G_1u+T_2G_2v$, if and only if, $T_1G_2=0_{\mathbb{B}(\mathcal{H}_1)}$ and $T_2G_1=0_{\mathbb{B}(\mathcal{H}_1)}$. Hence, we obtain the requested equivalence.
\end{proof}

\medskip

	\section*{Acknowledgments}
	It is my great pleasure to thank the referee for his careful reading of the paper and for several helpful suggestions.
	
	\section*{Ethics declarations}
	
	\subsection*{Availablity of data and materials}
	Not applicable.
	\subsection*{Conflict of interest}
	The author declares that he has no competing interests.
	\subsection*{Fundings}
	Not applicable.

\end{document}